\newtheorem{thm}{Theorem}[section]
\newtheorem{conj}[thm]{Conjecture}
\newtheorem{lemma}[thm]{Lemma}
\newtheorem{cor}[thm]{Corollary}
\theoremstyle{definition}
\newtheorem{exmp}[thm]{Example}
\def\eps{\varepsilon}
\newcounter{casenum}
\newcommand*{\rom}[1]{\expandafter{\romannumeral #1\relax}}
\def\@cite#1#2{{\normalfont[{\bfseries#1\if@tempswa , #2\fi}]}}
\title{Independent sets in the middle two layers of Boolean lattice}
\author{ J\'ozsef Balogh\thanks{Department of Mathematical Sciences, University of Illinois at Urbana-Champaign, IL, USA, and Moscow Institute of Physics and Technology, Russian Federation. Email: \texttt{jobal@illinois.edu}.
Partially supported by NSF Grant DMS-1764123 and Arnold O. Beckman Research Award (UIUC) Campus Research Board 18132 and the Langan Scholar Fund (UIUC).}
\quad Ramon I. Garcia\thanks{Department of Mathematics, University of Illinois at Urbana-Champaign, Urbana, IL, USA. Email: \texttt{rig2@illinois.edu}.}
\quad Lina Li\thanks{Department of Mathematics, University of Illinois at Urbana-Champaign, Urbana, IL, USA. Email: \texttt{linali2 @illinois.edu}.}}
\providecommand{\keywords}[1]{\textbf{\textit{Keywords:}} #1}
\begin{document}

\maketitle

\begin{abstract}
For an odd integer $n=2d-1$, let $\mathcal{B}(n, d)$ be the subgraph of the hypercube $Q_n$ induced by the two largest layers. 
In this paper, we describe the typical structure of independent sets in $\mathcal{B}(n, d)$ and give precise asymptotics on the number of them.
The proofs use Sapozhenko's graph container method and a recently developed method of Jenssen and Perkins, which combines Sapozhenko's graph container lemma with the cluster expansion for polymer models from statistical physics.
\end{abstract}

\keywords{Boolean lattice, Cluster expansion, Graph container method, Independent set
}

\section{Introduction}
\subsection{Background}
An \textit{independent set} in a graph $G$ is a subset of vertices no two of which are adjacent. 
Denote by $\mathcal{I}(G)$ the set of all independent sets of $G$.
By convention, we consider the empty set to be a member of $\mathcal{I}(G)$. 
The family of independent sets plays an important role in modern combinatorics,
in particular, independent sets in the discrete hypercube has received a lot of attention in recent decades, e.g., see~\cite{galvin2011threshold, jenssen2019independent, korshunov1983number, sapozhenko1989number}.

Denote by $Q_n$ the \textit{discrete hypercube} of dimension $n$, that is, the graph defined on the collection of subsets of $[n]:=\{1,\ldots,n\}$, where two sets are adjacent if and only if they differ in exactly one element.
Observe that $Q_n$ is an $n$-regular bipartite graph with bipartition classes $\mathcal{E}$ and $\mathcal{O}$ of size $2^{n-1}$, where $\mathcal{E}$ is the  set of vertices corresponding to the family of sets with an even number of elements, and $\mathcal{O}$ for those with an odd number of elements.
A trivial lower bound on $|\mathcal{I}(Q_n)|$ is $2\cdot 2^{2^{n-1}}-1$, as each of the $2^{2^{n-1}}$ subsets of $\mathcal{E}$ (and similarly of $\mathcal{O}$) is an independent set. Korshunov and Sapozhenko~\cite{korshunov1983number} in 1983 proved that this trivial bound is indeed not far off the truth.
\begin{thm}[\cite{korshunov1983number}]\label{thm:hypercube}
$
|\mathcal{I}(Q_n)|=2\sqrt{e}(1 + o(1))2^{2^{n-1}}
$
as $n\rightarrow \infty$.
\end{thm}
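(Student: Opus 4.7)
The plan is the standard two-step approach at the heart of Sapozhenko's container method. For each $I \in \mathcal{I}(Q_n)$, decompose $I = I_\mathcal{E} \cup I_\mathcal{O}$ with $I_\mathcal{E} = I \cap \mathcal{E}$ and $I_\mathcal{O} = I \cap \mathcal{O}$. The key identity is that for any $S \subseteq \mathcal{O}$, the number of $I$ with $I_\mathcal{O} = S$ equals $2^{2^{n-1} - |N(S)|}$, since $I_\mathcal{E}$ may be an arbitrary subset of $\mathcal{E} \setminus N(S)$. The strategy is to show that almost all independent sets have a ``minority side'' of bounded size, then evaluate the resulting sum exactly.

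Fix a slowly growing threshold $t = t(n) \to \infty$ with $t = O(\log n)$. Using the $\mathcal{E} \leftrightarrow \mathcal{O}$ symmetry and inclusion--exclusion,
\[
  |\mathcal{I}(Q_n)| = 2\bigl|\{I : |I_\mathcal{O}| \le t\}\bigr| - \bigl|\{I : |I_\mathcal{O}|, |I_\mathcal{E}| \le t\}\bigr| + \bigl|\{I : |I_\mathcal{O}|, |I_\mathcal{E}| > t\}\bigr|.
\]
The middle term is bounded by $\bigl(\sum_{k\le t}\binom{2^{n-1}}{k}\bigr)^2 = 2^{O(n\log n)} = o(2^{2^{n-1}})$. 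For the main term, note that any two distinct $x, x' \in \mathcal{O}$ satisfy $|N(x) \cap N(x')| \in \{0, 2\}$, so for all but an $O(k^2 n^2 / 2^n)$ fraction of $k$-sets $S \subseteq \mathcal{O}$ one has $|N(S)| = nk$, while in the exceptional case the deficit is at most $k(k-1)$. Consequently,
\[
  \bigl|\{I : |I_\mathcal{O}| \le t\}\bigr| \;=\; \sum_{k=0}^{t} \binom{2^{n-1}}{k} 2^{2^{n-1} - nk}\bigl(1 + o(1)\bigr) \;=\; 2^{2^{n-1}} \sum_{k=0}^{t} \frac{1}{k!\, 2^k}\bigl(1 + o(1)\bigr),
\]
because $\binom{2^{n-1}}{k} 2^{-nk} = \tfrac{1}{k!\, 2^k}(1+o(1))$ uniformly for $k \le t$. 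The sum converges to $e^{1/2} = \sqrt{e}$, yielding a main-term contribution of $2\sqrt{e}(1+o(1))2^{2^{n-1}}$.

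All that remains is to verify $|\{I : |I_\mathcal{O}|, |I_\mathcal{E}| > t\}| = o(2^{2^{n-1}})$, which is where Sapozhenko's graph container lemma is indispensable. The idea is to associate to each such $I$ (w.l.o.g.\ with $|I_\mathcal{O}| \le |I_\mathcal{E}|$) a compact certificate $(A, B)$ with $I_\mathcal{O} \subseteq A \subseteq \mathcal{O}$ and $N(A) \subseteq B \subseteq \mathcal{E}$ drawn from a small family; each such container is compatible with at most $2^{|A| + 2^{n-1} - |B|}$ independent sets, and the expansion $|N(S)| \ge (1 - o(1))n|S|$ for moderately-sized $S \subseteq \mathcal{O}$ forces $|B|$ large enough that the total contribution is negligible. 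The main obstacle lies precisely here: the naive certificate $(I_\mathcal{O}, N(I_\mathcal{O}))$ produces far too many containers, and a $2$-linked closure and covering-approximation argument is needed to compress the family down to quasi-polynomial size while preserving the expansion-based lower bound on $|B|$. This container construction is exactly the toolkit that the present paper adapts to $\mathcal{B}(n, d)$.
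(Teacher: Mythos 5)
First, note that the paper does not prove this statement at all: it is quoted from Korshunov--Sapozhenko, with a pointer to Galvin's exposition for the proof, so there is no internal argument to compare against. Your outline follows exactly that classical route, and the ``easy half'' is correct and essentially complete: the inclusion--exclusion identity, the bound $2^{O(n\log n)}$ on sets with both sides small, and the main-term computation $\sum_{k\le t}\binom{2^{n-1}}{k}2^{2^{n-1}-nk}(1+o(1)) = \sqrt{e}(1+o(1))2^{2^{n-1}}$ (using that all but an $O(k^2n^2/2^n)$ fraction of $k$-sets in one side of $Q_n$ have pairwise disjoint neighbourhoods, and that the deficit $2\binom{k}{2}$ in the exceptional case is harmless for $k\le t=O(\log n)$) are all sound.

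The genuine gap is the last step, which is the entire difficulty of the theorem: you assert but do not prove that $|\{I : |I_\mathcal{O}|,|I_\mathcal{E}|>t\}| = o(2^{2^{n-1}})$. Two specific problems. First, the expansion estimate you invoke, $|N(S)|\ge(1-o(1))n|S|$ for ``moderately-sized'' $S$, is false once $|S|\gg n$: taking $S$ to be the even-weight vertices of an $m$-dimensional subcube gives $|S|=2^{m-1}$ and $|N(S)|=2^{m-1}(n-m+1)$, so the expansion factor can drop to $n/2$ or, for $m$ close to $n$, to $1+O(1/\sqrt{n})$. The actual argument must therefore be run at every scale, with the gain measured by $t=b-a$ where $a=|[A]|$ and $b=|N(A)|$ for the closure $[A]$, exactly as in Lemma~\ref{lem:container} and Lemma~\ref{lem:satu} of this paper. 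Second, even granting a container lemma of the form ``$\sum_{A\in\mathcal{G}(a,b)}2^{-b}$ is small when $b-a$ is large,'' one still has to (i) reduce from arbitrary $I_\mathcal{O}$ with $|I_\mathcal{O}|>t$ to its $2$-linked components and control how the components combine (the count is over tuples of components, as in the $\beta(i)$ computation in Section~3 of this paper), and (ii) handle the regime of small components with $b-a$ small, where the container bound gives nothing and one must instead use the direct count $\binom{2^{n-1}}{k}2^{-nk+O(k^2)}$ as in your main term. None of this is routine, and your sketch does not indicate how the pieces are assembled; as written, the proof is incomplete precisely where the theorem stops being elementary.
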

An influential proof of Theorem~\ref{thm:hypercube} was later given by Sapozhenko~\cite{sapozhenko1989number} in 1989, which depends on a technical lemma that appeared in~Sapozhenko~\cite{sapozhenko1987number}. 
This lemma brings up an intelligent idea on bounding the number of subsets of a given size whose neighbourhood is also of a given size, and is now known as the Sapozhenko's graph container lemma. See~\cite{galvin2019independent} for a beautifully written exposition of this proof.
Inspired by Sapozhenko's work, Galvin~\cite{galvin2011threshold} generalized Theorem~\ref{thm:hypercube} to the hard-core models on $Q_n$ with parameter 
$\lambda>\sqrt{2}-1,$
and gave a systematic study on the behavior of the random independent set chosen from $Q_n$ according to the hard-core model.  

Very recently, Jenssen and Perkins~\cite{jenssen2019independent} reinterpreted Sapozhenko’s proof in terms of the \textit{cluster expansion} from statistical physics and refined Korshunov-Sapozhenko's~\cite{korshunov1983number}  and Galvin's~\cite{galvin2011threshold} results by computing additional terms in the asymptotic expansion. 
Moreover, they determine the asymptotics of hard-core models on $Q_n$ for all constant $\lambda$ by using more terms of the cluster expansion.
An example of their results on $\mathcal{I}(Q_n)$ is the following.
\begin{thm}[Jenssen and Perkins~\cite{jenssen2019independent}]
\[
|\mathcal{I}(Q_n)|=2\sqrt{e}\cdot2^{2^{n-1}}\left(
1 + \frac{3n^2 - 3n -2}{8\cdot 2^n}
+ \frac{243n^4 - 646n^3 -33n^2 + 436n + 76}{384\cdot 2^{2n}}
+ O(n^6\cdot 2^{-3n})
\right),
\]
as $n\rightarrow \infty$.
\end{thm}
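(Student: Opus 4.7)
My plan would follow the Jenssen--Perkins strategy of reinterpreting Sapozhenko's argument via an abstract polymer model and then applying the cluster expansion to extract as many terms as desired. First, partition the independent sets $I$ according to which side is ``small'': let $\mathcal{I}^E$ (resp.\ $\mathcal{I}^O$) consist of those $I$ with $|I\cap\mathcal{O}|\leq M$ (resp.\ $|I\cap\mathcal{E}|\leq M$), for a suitable threshold $M$ chosen so that both $\mathcal{I}^E\cap\mathcal{I}^O$ and $\mathcal{I}(Q_n)\setminus(\mathcal{I}^E\cup\mathcal{I}^O)$ are negligible at the scale of $n^6\cdot 2^{-3n}\cdot 2^{2^{n-1}}$ (both estimates will be supplied by the graph container lemma). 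By the bipartite symmetry $|\mathcal{I}^E|=|\mathcal{I}^O|$, so
\[
|\mathcal{I}(Q_n)|=2|\mathcal{I}^E|+(\text{error})=2\cdot 2^{2^{n-1}}\,\Xi+(\text{error}),
\]
where $\Xi=\sum_{S\subseteq\mathcal{O},\,|S|\leq M}2^{-|N(S)|}$ is the polymer partition function described below. Since the leading order of $\Xi$ is $\sqrt{e}$, this reproduces the factor $2\sqrt{e}$ of Korshunov--Sapozhenko.

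\textbf{Polymer model and cluster expansion.} Define a polymer to be a non-empty $\gamma\subseteq\mathcal{O}$ that is connected in the graph on $\mathcal{O}$ whose edges join pairs of vertices sharing a neighbor in $\mathcal{E}$, with activity $w(\gamma):=2^{-|N(\gamma)|}$. Two polymers are compatible when their $1$-neighborhoods in $\mathcal{E}$ are disjoint, in which case $|N(\gamma_1\sqcup\gamma_2)|=|N(\gamma_1)|+|N(\gamma_2)|$ and the weights factorize. Decomposing each admissible $S$ into its connected components yields
\[
\Xi=\sum_{\text{compatible }\{\gamma_1,\ldots,\gamma_k\}}\prod_{i=1}^k w(\gamma_i),
\]
and the cluster expansion then writes
\[
\log\Xi=\sum_{C}\varphi(C)\prod_{\gamma\in C}w(\gamma),
\]
where $C$ ranges over clusters (ordered tuples of polymers whose incompatibility graph is connected) and $\varphi(C)$ is the Ursell coefficient.

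\textbf{Enumeration and tail bound.} I would organize contributions by a notion of cluster complexity that combines $\sum_{\gamma}|\gamma|$ and the number of polymers in $C$. Singleton clusters contribute $|\mathcal{O}|\cdot 2^{-n}=1/2$ to $\log\Xi$, so exponentiating yields the leading $\sqrt{e}$. The next batch -- single polymers of size $2$ (pairs of odd vertices at distance $2$ in $Q_n$) and pairs of incompatible singletons -- produces the $O(n^2/2^n)$ correction equal to $(3n^2-3n-2)/(8\cdot 2^n)$; enumerating clusters of the next level of complexity similarly produces the $O(n^4/2^{2n})$ correction with the stated quartic coefficient. Each such contribution reduces to a finite local count on $Q_n$ via basic hypercube combinatorics. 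The main obstacle is to bound the tail, namely the sum of $|\varphi(C)|\prod w(\gamma)$ over all clusters of complexity beyond those explicitly enumerated, by $O(n^6\cdot 2^{-3n})$. This is exactly where Sapozhenko's graph container lemma is crucial: it supplies sharp upper bounds on the number of $2$-linked $\gamma\subseteq\mathcal{O}$ with prescribed $|\gamma|$ and $|N(\gamma)|$, which, via the Koteck\'y--Preiss criterion, controls the weighted sums $\sum_{\gamma'\not\sim\gamma}|w(\gamma')|e^{|\gamma'|}$ and hence the tail of the expansion. Exponentiating the truncated $\log\Xi$, estimating the residual and overlap regions by the same container tools, and combining with the symmetrization finally produces the claimed asymptotic.
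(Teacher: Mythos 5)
First, a point of orientation: the paper does not prove this statement. It is quoted verbatim from Jenssen and Perkins~\cite{jenssen2019independent} as background; the only proofs in the paper using this machinery are for the two-layer graph $\mathcal{B}(n,d)$ (Sections 4--6), not for $Q_n$. So there is no ``paper's own proof'' to compare against, only the analogous argument for a different graph. Your outline does correctly identify the Jenssen--Perkins strategy, and it is the same strategy the paper adapts: symmetrize over the two sides, encode the minority side as a polymer model with weights $2^{-|N(\gamma)|}$ (i.e.\ $\lambda=1$ in the paper's $w(S)=\lambda^{|S|}(1+\lambda)^{-|N(S)|}$), verify the Koteck\'y--Preiss condition via isoperimetry plus Sapozhenko's container lemma, and read off low-order terms of the cluster expansion.

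That said, as a proof of \emph{this} statement the proposal has genuine gaps. The entire content of the theorem is the two explicit polynomials $3n^2-3n-2$ and $243n^4-646n^3-33n^2+436n+76$, and you never derive either: you assert that ``the next batch'' of clusters ``produces'' them. Already the $2^{-2n}$ term requires $L_3$, i.e.\ enumerating all clusters of total size $3$ (ordered triples of pairwise-incompatible singletons with and without repetitions, mixed clusters of a singleton and a size-$2$ polymer, and single $2$-linked triples, each weighted by the Ursell function of the corresponding connected graph on at most three vertices), followed by expanding $\exp(L_1+L_2+L_3)$ to the right order; none of this bookkeeping appears, and it is exactly where the paper's own computation for $\mathcal{B}(n,d)$ spends its effort (the ``Polymers/Clusters'' paragraphs before Theorem~5.1). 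Second, your partition function $\Xi=\sum_{S\subseteq\mathcal{O},\ |S|\le M}2^{-|N(S)|}$ with a \emph{global} cap $M$ does not factor over connected components, so the polymer/cluster formalism does not apply to it as written; the correct setup (both in~\cite{jenssen2019independent} and in the paper's Section~5) caps each polymer individually, e.g.\ $|[\gamma]|\le 2^{n-2}$, and then handles the resulting double-counting of independent sets lying in $\mathcal{M}_{\mathcal E}\cap\mathcal{M}_{\mathcal O}$ by a separate probabilistic argument (the analogue of Lemmas~6.4--6.6). Finally, the Koteck\'y--Preiss verification is not automatic: one needs quantitative isoperimetric lower bounds on $|N(\gamma)|$ in three size regimes together with the container lemma, with decay functions $f,g$ tuned so that the tail beyond size $3$ is $O(n^6 2^{-3n})$. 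These are all fixable, and your plan is the right one, but in its current form it is an outline of the known proof rather than a proof.
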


For every $k\in[n]$, we say a collection of subsets of $[n]$ is the \textit{$k$-th layer} of $Q_n$, denoted by $\mathcal{L}_k$, if it consists of all subsets of $[n]$ of size $k$.
Denote by $\mathcal{B}(n, k)$ the subgraph of $Q_n$  induced on $\mathcal{L}_k\cup \mathcal{L}_{k-1}$.

 Duffus, Frankl, and R{\"o}dl~\cite{duffus2011maximal} initiated the study of $\mathrm{mis}(\mathcal{B}(n, k))$, the number of maximal independent sets of $\mathcal{B}(n, k)$.
The trivial lower bound, $2^{\binom{n-1}{k-1}}$, is based on the observation that for any graph $G$ and induced matching $M$ of $G$, each of the $2^{|M|}$ sets consisting of one vertex from each edge of $M$ extends to at least one maximal independent set, and these extensions are all different.
Ilinca and Kahn~\cite{ilinca2013counting} determined the logarithmic asymptotics of $\mathrm{mis}(\mathcal{B}(n, k))$ and proposed the question of determining its actual asymptotics. Indeed, Ilinca and Kahn~\cite{ilinca2013counting} conjectured that $\mathrm{mis}(\mathcal{B}(n, k))$ is not far from the trivial lower bound. However, this conjecture was later disproved by Balogh, Treglown and Wagner~\cite{balogh2016applications}, who improved the trivial lower bound by a factor of $2^{Cn^{3/2}}$ for $k$ sufficiently close to $n/2$.

 Although the logarithmic asymptotics of $\mathrm{mis}(\mathcal{B}(n, k))$ has been determined, surprisingly, a more fundamental question, that is, determining the asymptotics of $\mathcal{I}(\mathcal{B}(n, k))$, has not been touched in the literature.
 Similarly as for independent sets of the hypercube, a trivial lower bound
 \begin{equation}\label{eq:trilb}
   2^{\binom{n}{k}} + 2^{ \binom{n}{k-1}} - 1
 \end{equation}
 can be obtained by taking all subsets contained in $\mathcal{L}_{k}$ or $\mathcal{L}_{k-1}$.
 However, one can easily improve the lower bound by considering all independent sets with exactly one element in one of the layers, which shows that the truth is indeed far from (\ref{eq:trilb}). 
 For the upper bound, there are several studies of independent sets in general graphs, see~\cite{kahn2001entropy, sah2019number, zhao2010number}. 
In particular, a direct application of Sah, Sawhney, Stoner, and Zhao~\cite{sah2019number} shows that the number of independent sets in $\mathcal{B}(n, k)$ is at most
 \begin{equation}\label{eq:triub}
  (2^{k} + 2^{n-k+1} -1)^{\frac12\left(\frac{1}{n-k+1}\binom{n}{k} + \frac{1}{k}\binom{n}{k-1}\right)},
 \end{equation}
 which is far from the trivial lower bound.  

\subsection{Our results}
Let $G$ be a simple bipartite graph with classes $X$ and $Y$. 
A set $A\subseteq X$ (and similarly for $A\subseteq Y$) is \textit{$k$-linked} if $A$ is connected in $G^k$, where $G^k$ is a simple graph defined on $V(G)$, in which two vertices are adjacent if their distance in $G$ is at most $k$. A \textit{$k$-linked component} of a set $B\subseteq X$ (and similarly for $B\subseteq Y$) is a maximal $k$-linked subset of $B$.

 In this paper, we study the independent sets in the graph $\mathcal{B}(n, k)$ when $n=2d-1$ is an odd number and $k=d$, that is, the subgraph of $Q_n$ induced by the two largest layers. 
 Let $N=\binom{n}{d}$.
 Observe that $\mathcal{B}(n, d)$ is a $d$-regular bipartite graph with bipartition classes $\mathcal{L}_d$ and $\mathcal{L}_{d-1}$, each of size $N$. 
 
 For the hypercube $Q_n$, a simple probabilistic argument shows that $k$ vertices in $\mathcal{E}$ (and similarly in $\mathcal{O}$) typically have disjoint neighborhoods, for sufficiently small $k$.
By taking independent sets with such vertices on one side, it is not hard to improve the trivial lower bound $|\mathcal{I}(Q_{n})| \geq 2^{2^n- 1}$ to that given by Theorem~\ref{thm:hypercube}.
For more details, we refer readers to~\cite{galvin2019independent}.
In other words, an independent set in $Q_n$ typically satisfies the following property: all 2-linked components of $I\cap \mathcal{E}$ or $I\cap \mathcal{O}$ are of size 1.

However, the phenomenon is no longer true for $\mathcal{B}(n, d)$.
We will see in Section~\ref{sec: LB} that indeed a lot of independent sets in $\mathcal{B}(n, d)$ have many pairs of vertices in one of the classes, which are at distance 2 from each other. Our first main result describes the typical structure of independent sets in $\mathcal{B}(n, d)$.
 \begin{thm}\label{thm:tystru}
Amost all independent sets $I$ in $\mathcal{B}(n, d)$ have the following property\footnote{That is, the proportion of independent sets that do not have this property goes to zero as $d \rightarrow \infty$.}. There exists $k\in\{d-1,\ d\}$ such that every 2-linked component of $I\cap \mathcal{L}_k$ is either of size 1 or 2.
\end{thm}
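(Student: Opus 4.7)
The plan is to prove the stronger one-sided statement that, taking $k = d-1$, almost every $I$ already has $A := I\cap \mathcal{L}_{d-1}$ with all $2$-linked components of size at most $2$; by the complementation automorphism $S \mapsto [n]\setminus S$ of $\mathcal{B}(n,d)$ the symmetric statement also holds for $B := I\cap \mathcal{L}_d$, but the one-sided claim alone yields the theorem. The argument is a polymer/cluster-expansion decomposition of $|\mathcal{I}(\mathcal{B}(n,d))|$ in the spirit of Jenssen-Perkins, combined with the isoperimetry of $\mathcal{B}(n,d)$ and Sapozhenko's graph container method.

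Parametrize $I$ by $(A,B) := (I\cap \mathcal{L}_{d-1}, I\cap \mathcal{L}_d)$; independence forces $B \subseteq \mathcal{L}_d \setminus N(A)$, so $|\mathcal{I}(\mathcal{B}(n,d))| = \sum_A 2^{N-|N(A)|}$. Decompose $A$ into its $2$-linked components and split $A = A_s \sqcup A_b$ into the union of components of size $\le 2$ (small) and of size $\ge 3$ (big). Any two distinct components are at graph distance $\ge 3$ in $\mathcal{B}(n,d)$, hence (since same-side distances are even) at distance $\ge 4$, so their neighborhoods in $\mathcal{L}_d$ are disjoint. Therefore $|N(A)| = |N(A_s)| + |N(A_b)|$, giving the factorization
\[
|\mathcal{I}| \;=\; 2^N \sum_{A_b} 2^{-|N(A_b)|}\, \Xi_s(A_b), \qquad \Xi_s(A_b) \;:=\; \sum_{\substack{A_s \text{ small-polymer config}\\ N(A_s)\cap N(A_b) = \emptyset}} 2^{-|N(A_s)|}.
\]

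The number of $I$ with $A_b \ne \emptyset$ is the same sum restricted to $A_b \ne \emptyset$. Since demanding compatibility with $A_b$ only shrinks the family of admissible $A_s$, we have $\Xi_s(A_b) \le \Xi_s(\emptyset)$; bounding the numerator via this inequality and the denominator below by its $A_b = \emptyset$ term gives
\[
\Pr_I\bigl[\,A_b \ne \emptyset\,\bigr] \;\le\; \sum_{A_b \ne \emptyset} 2^{-|N(A_b)|} \;\le\; \prod_{\gamma\ \text{big}}\bigl(1+2^{-|N(\gamma)|}\bigr) - 1 \;\le\; \exp\!\Bigl(\,\sum_{\gamma\ \text{big}} 2^{-|N(\gamma)|}\Bigr) - 1,
\]
so the theorem reduces to showing $S := \sum_{\gamma\ \text{big}} 2^{-|N(\gamma)|} = o(1)$, where $\gamma$ ranges over $2$-linked subsets of $\mathcal{L}_{d-1}$ of size $\ge 3$.

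Two vertices of $\mathcal{L}_{d-1}$ at $G$-distance $2$ (i.e.\ $(d-1)$-sets differing in two elements) share exactly one $d$-superset, so pairwise neighborhood overlaps contribute at most $1$ each in inclusion-exclusion, yielding $|N(\gamma)| \ge ad - \binom{a}{2}$ for a $2$-linked $\gamma$ of size $a$. A standard tree-growing argument, using that each vertex has $d(d-1)$ same-side distance-$2$ neighbors, bounds the number of $2$-linked $a$-subsets of $\mathcal{L}_{d-1}$ through a given vertex by $(e\,d^2)^{a-1}$. The $a = 3$ contribution to $S$ is then $O(N d^4/8^d) = O(d^{3.5}/2^d)\to 0$ using $N = \binom{2d-1}{d} = \Theta(4^d/\sqrt{d})$, and similar direct estimates handle each fixed $a$ with $a \lesssim d$. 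The main obstacle is controlling $S$ uniformly as $a$ grows with $d$: the tree count degrades once $a \gtrsim d$ and the isoperimetric bound $ad - \binom{a}{2}$ becomes vacuous for $a \ge 2d$. Here one invokes Sapozhenko's graph container lemma, which, for each target $b$, bounds the number of $2$-linked subsets of $\mathcal{L}_{d-1}$ with $|N(\gamma)| = b$ by something of the form $\binom{N}{O(b/d)}\cdot 2^{O(b/d)}$; the resulting weighted sum over $b$ decays geometrically and gives $S = o(1)$ as required.
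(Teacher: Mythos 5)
There is a genuine gap, and it occurs at the very first step: the ``stronger one-sided statement'' you propose to prove --- that almost every $I$ has all $2$-linked components of $I\cap\mathcal{L}_{d-1}$ of size at most $2$ --- is false. By the symmetry between the two layers, for asymptotically half of all independent sets the layer $\mathcal{L}_{d-1}$ is the \emph{majority} side: conditioned on the minority part $I\cap\mathcal{L}_d$, the set $A=I\cap\mathcal{L}_{d-1}$ is a uniform random subset of the $\geq N(1-o(1))$ vertices of $\mathcal{L}_{d-1}\setminus N(I\cap\mathcal{L}_d)$, and such a set has (with probability $1-e^{-\Omega(N)}$) enormous $2$-linked components. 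So $\Pr[A_b\neq\emptyset]\to 1/2$, not $0$. The theorem genuinely needs the quantifier ``there exists $k$ depending on $I$,'' and any proof must first locate the minority/defect side. Correspondingly, your reduction target $S=\sum_{\gamma\text{ big}}2^{-|N(\gamma)|}=o(1)$ is also false: the sum ranges over \emph{all} $2$-linked $\gamma$ of size $\geq 3$, including those of size around $N/2$, for which a typical $\gamma$ has $|N(\gamma)|\approx N(1-2^{-d})$, so that slice alone contributes about $\binom{N}{N/2}2^{-N}2^{N2^{-d}}=e^{\Theta(2^d/\sqrt{d})}\to\infty$. This is not a technicality you can patch with the container lemma in the last step: Lemma~\ref{lem:container} (and the isoperimetric bound $|N(\gamma)|\geq(1+\tfrac{1}{2d-1})|\gamma|$) are only available for $|[\gamma]|\leq\tfrac12\binom{n}{d}$, i.e.\ precisely for minority-side-sized components, and no bound of the strength you invoke can hold without that restriction.

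The paper's proof avoids this by restricting from the outset to independent sets with $|I\cap\mathcal{L}_d|\leq N/2$ and analyzing only the big components of the \emph{minority} side $I\cap\mathcal{L}_d$; it then uses a local switching (degree-counting between $\mathcal{U}_i$ and $\mathcal{U}_0$) argument rather than a global partition-function comparison, with the container lemma applied only in its valid range $a\leq\tfrac12\binom{n}{d}$. Your factorization $|N(A)|=|N(A_s)|+|N(A_b)|$, the monotonicity bound $\Xi_s(A_b)\leq\Xi_s(\emptyset)$, the isoperimetric estimate $|N(\gamma)|\geq ad-\binom{a}{2}$, and the tree-growing count are all fine and would carry over; but the argument must be run on the minority side (equivalently, polymers must be restricted to those with $|[\gamma]|\leq\tfrac12\binom{n}{d}$), with the majority side handled by symmetry as in the paper.
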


Unlike many other similar problems in the field (e.g., the number of $K_t$-free graphs), even though we have a deep understanding on the structure of sets in $\mathcal{I}(\mathcal{B}(n, d))$, it is still very hard to estimate the magnitude of $\mathcal{I}(\mathcal{B}(n, d))$ as its typical structure is intrinsically sophisticated due to the appearance of 2-linked components of size 2. 
From (\ref{eq:trilb}) and (\ref{eq:triub}), we have the following trivial bounds:
 \[
 2\cdot 2^{N}-1
 \leq|\mathcal{I}(\mathcal{B}(n, d))|
 \leq(2^{d+1}-1)^{\frac1d\binom{n}{d}}\leq 2^{N + N/d}.   
 \]
Our second main result describes the precise asymptotics for the number of independent sets in $\mathcal{B}(n, d)$.
\begin{thm}\label{thm:indset}
As $d\rightarrow \infty$, the number of independent sets in $\mathcal{B}(n, d)$ is
\begin{align*}
|\mathcal{I}(\mathcal{B}(n, d))|&=2(1+o(1))2^{N}\exp\left(N2^{-d} + \binom{d}{2}N2^{-2d}\right)
.
\end{align*}
\end{thm}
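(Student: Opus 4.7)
The plan is to reduce $|\mathcal{I}(\mathcal{B}(n,d))|$ to a polymer partition function and evaluate it by the cluster expansion, following Jenssen-Perkins~\cite{jenssen2019independent}. For $k\in\{d-1,d\}$, let $\mathcal{J}_k$ denote the set of independent sets $I$ such that every 2-linked component of $I\cap\mathcal{L}_k$ has size at most two. Theorem~\ref{thm:tystru} yields $|\mathcal{I}(\mathcal{B}(n,d))|=|\mathcal{J}_d\cup\mathcal{J}_{d-1}|+o(|\mathcal{I}(\mathcal{B}(n,d))|)$, and the complementation automorphism $S\mapsto[n]\setminus S$ of $\mathcal{B}(n,d)$ swaps $\mathcal{L}_d$ and $\mathcal{L}_{d-1}$, inducing a bijection $\mathcal{J}_d\to\mathcal{J}_{d-1}$. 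By inclusion-exclusion it therefore suffices to compute $|\mathcal{J}_d|$ and bound $|\mathcal{J}_d\cap\mathcal{J}_{d-1}|=o(|\mathcal{J}_d|)$.

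Since $\mathcal{L}_d$ is one side of a bipartition, any $A\subseteq\mathcal{L}_d$ is independent, and for fixed $A$ the other half $I\cap\mathcal{L}_{d-1}$ can be any subset of $\mathcal{L}_{d-1}\setminus N(A)$. Thus
\[
|\mathcal{J}_d|=2^N\cdot\Xi,\qquad \Xi:=\sum_{A}2^{-|N(A)|},
\]
where the sum ranges over $A\subseteq\mathcal{L}_d$ whose 2-linked components all have size $\leq 2$. The next step is to recognise $\Xi$ as the grand partition function of an abstract polymer model whose polymers are the 2-linked subsets of $\mathcal{L}_d$ of size $1$ or $2$, weighted by $w(\gamma)=2^{-|N(\gamma)|}$, where two polymers are compatible iff their neighborhoods in $\mathcal{L}_{d-1}$ are disjoint (equivalently, iff they are not mutually 2-linked). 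A Sapozhenko-type container bound on the number of 2-linked subsets of $\mathcal{L}_d$ with prescribed neighborhood size, combined with the exponential weight $w(\gamma)$, verifies the Koteck\'y-Preiss criterion uniformly in $d$; hence the cluster expansion $\log\Xi=\sum_C\varphi(C)\prod_{\gamma\in C}w(\gamma)$ converges absolutely.

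Truncating, three contributions dominate: (i) single singleton polymers give $N\cdot 2^{-d}$; (ii) the $Nd(d-1)/2$ doubleton polymers $\{u,v\}$ with $|u\cap v|=d-1$ each have weight $2^{-(2d-1)}$, contributing $Nd(d-1)\cdot 2^{-2d}$; (iii) the 2-cluster Ursell correction for pairs of distinct incompatible singletons contributes $-\tfrac{1}{2}Nd(d-1)\cdot 2^{-2d}$. The sum of (ii) and (iii) is exactly $\binom{d}{2}N\cdot 2^{-2d}$, while all remaining terms (self-loops, clusters of $\geq 3$ polymers, or polymers of size $\geq 3$) are $o(1)$ by container-based tail bounds --- for example, any size-$3$ 2-linked polymer has $|N(\gamma)|\geq 3d-2$ and there are $O(Nd^4)$ of them, giving a total contribution of $O(Nd^4\cdot 2^{-3d})=o(1)$. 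This yields $\log\Xi=N\cdot 2^{-d}+\binom{d}{2}N\cdot 2^{-2d}+o(1)$, whence $|\mathcal{J}_d|=(1+o(1))\,2^N\exp\bigl(N2^{-d}+\binom{d}{2}N2^{-2d}\bigr)$.

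To bound the overlap: for any typical $A\in\mathcal{J}_d$ (where $|N(A)|\ll N$), the set $B:=I\cap\mathcal{L}_{d-1}$ is a uniformly random subset of $\mathcal{L}_{d-1}\setminus N(A)$, and the 2-linking graph restricted to this host contains $\Omega(Nd^4)$ paths of length $2$; a second-moment computation yields $\Pr(B\text{ has no 2-linked triple})=O(1/(Nd^4))$. Summing over $A$ then gives $|\mathcal{J}_d\cap\mathcal{J}_{d-1}|=O(|\mathcal{J}_d|/(Nd^4))=o(|\mathcal{J}_d|)$, so $|\mathcal{I}(\mathcal{B}(n,d))|=2|\mathcal{J}_d|(1+o(1))$, as claimed. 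The main technical hurdle will be verifying the Koteck\'y-Preiss condition uniformly in $d$ and effectively bounding the cluster-expansion tail, which requires carefully adapting Sapozhenko's graph container lemma from $Q_n$ to $\mathcal{B}(n,d)$ in the spirit of~\cite{jenssen2019independent}.
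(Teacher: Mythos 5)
Your proposal reaches the correct asymptotics, but by a genuinely different route from the paper, so let me compare the two. The paper never invokes Theorem~\ref{thm:tystru} when proving Theorem~\ref{thm:indset}: it deduces the count from Theorem~\ref{thm:indpoly}, whose polymer model (Section~\ref{sec:res}) admits as polymers \emph{all} nonempty 2-linked subsets $S$ of a layer with $|[S]|\leq\frac12\binom{n}{d}$, so that $2(1+\lambda)^{N}\Xi(\mathcal{P},w)=Z(\lambda)+\sum_{I\in\mathcal{B}}\lambda^{|I|}$ holds exactly (Lemma~\ref{lem: appro}), the overlap term being dispatched by the defect-side/Chernoff argument of Lemmas~\ref{Independent set is small} and~\ref{Min side is defect side}; the price is that convergence of the cluster expansion genuinely needs the container lemma (Lemma~\ref{lem:container}) and the Koteck\'y--Preiss theorem with carefully tuned $f,g$ (Lemmas~\ref{lem:kot-cond} and~\ref{lem:conv}), and the payoff is the stronger statement for general $\lambda$ and arbitrary truncation order. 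You instead use Theorem~\ref{thm:tystru} to shrink the polymer alphabet to sizes $1$ and $2$ at the outset, which makes the Koteck\'y--Preiss verification essentially trivial --- indeed no container bound is needed there, so your remarks about a ``Sapozhenko-type container bound'' for convergence and about ``polymers of size $\geq 3$'' overstate what your model contains --- at the cost of an approximate rather than exact identity and a separate bound on $|\mathcal{J}_d\cap\mathcal{J}_{d-1}|$. Two caveats. First, your route does not actually bypass Sapozhenko's method: the proof of Theorem~\ref{thm:tystru} itself rests on Lemma~\ref{lem:container}, so the machinery is merely relocated. Second, in the overlap bound your second-moment argument only applies when the host $\mathcal{L}_{d-1}\setminus N(A)$ is large; you must separately dispose of the sets $A$ with $|N(A)|$ comparable to $N$, e.g.\ by noting that for your restricted $A$ one has $|N(A)|\geq(d-\tfrac12)|A|$, whence the total weight $\sum_A 2^{-|N(A)|}$ contributed by $|N(A)|>N/2$ is $2^{-\Omega(N)}$ and is negligible against $\Xi\geq 1$. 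Your evaluation of the leading cluster terms --- in particular the cancellation between the $N\binom{d}{2}$ doubleton polymers of weight $2^{-(2d-1)}$ and the Ursell correction $-\tfrac12 Nd(d-1)2^{-2d}$ for incompatible singleton pairs, leaving exactly $\binom{d}{2}N2^{-2d}$ --- agrees with the paper's computation of $L_1$ and $L_2$.
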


An application of Stirling's formula gives $N=(1+o(1))2^{2d-1}/\sqrt{\pi d}$. 
Then we have
\[
N2^{-d} + \binom{d}{2}N2^{-2d} 
=(1+o(1))\frac{2^{d-1}}{\sqrt{\pi d}} + 
(1+o(1))\frac{d^{3/2}}{4\sqrt{\pi}},
\]
which measures how far the truth is deviated from the trivial lower bound $2\cdot 2^{N}-1$. 
To motivate this complicated formula provided in~Theorem~\ref{thm:indset}, we describe a collection of independent sets, whose size is `reasonably close' to $|\mathcal{I}(\mathcal{B}(n, d))|$, see Example \ref{exmp:lb} in Section~\ref{sec: LB}.

Similarly as in some of the previous work (e.g., see~\cite{galvin2011threshold, jenssen2019independent}), instead of counting the number of independent sets in $\mathcal{B}(n, d)$, we prove a generalization of Theorem~\ref{thm:indset} for independence polynomials with a wide range of parameters. The statement of this stronger theorem requires more technical definitions from statistical physics and from~\cite{jenssen2019independent}, and therefore we postpone it to Section~\ref{sec:res}.

One of the main approaches to the proof of Theorem~\ref{thm:indset} is the recently developed method of Jenssen and Perkins~\cite{jenssen2019independent}, which combines Sapozhenko’s graph container lemma, a classical tool from graph theory, with the cluster expansion for polymer models, a well-studied technique in statistical physics.
This method is a powerful tool for obtaining considerably sharper asymptotics and detailed probabilistic information about the typical structure of independent sets for certain bipartite graphs. For more intuitive explanations of this method, we refer the readers to the original paper~\cite{jenssen2019independent}.

Surprisingly, the method of Jenssen and Perkins, which was first used for counting independent sets in $Q_n$, works smoothly for independent sets in $\mathcal{B}(n, d)$, despite the substantial difference between their typical structures. 
This perhaps demonstrates that the method has potential to handle objects with more sophisticated underlying structures. 
A closely related problem is the study of proper $q$-colorings of $Q_n$.
The work of Galvin~\cite{galvin2003homomorphisms} and Kahn and Park~\cite{kahn2020number} shows that for $q=\{3, 4\}$, proper $q$-coloring typically are not far from the trivial construction, that is, using $\lfloor q/2 \rfloor$ colors for one bipartite class and the remaining $\lceil q/2 \rceil$ colors for the other class.
Galvin and Engbers~\cite{engbers2012h}, and Kahn and Park~\cite{kahn2020number} also pointed out that for $q\geq 5$, colorings will typically have many `flaw's, which substantially increases the difficulty of the problem. 
As we were working on this project, we heard that Keevash and Jenssen~\cite{KJ} apply this method to study the number of $q$-colorings of $Q_n$ for $q\geq 5$.
\newline

The rest of the paper is organized as follows.
In Section~2, we first present some preliminary results, which are crucial for our proofs.
Then we discuss the typical behavior of independent sets in $\mathcal{B}(n, d)$ and prove Theorem~\ref{thm:tystru} in Section~3.
In Section~4, we give a general introduction on polymer models and cluster expansions using the language of graph theory.
In Section~5, we introduce the specific polymer model used in this paper, and present a generalization of Theorem~\ref{thm:indset} for counting weighted independent sets in $\mathcal{B}(n, d)$, that is, Theorem~\ref{thm:indpoly}.
We then prove Theorem~\ref{thm:indpoly} in Section~6,
and close the paper with the proof of Lemma~\ref{lem:container} in Section~7.

\section{Preliminaries}
The most important tool of this paper is a following variant of Lemma~3.10 in~\cite{galvin2011threshold} for $\mathcal{B}(n, d)$, which can be viewed as a weighted version of Sapozhenko's graph container lemma~\cite{sapozhenko1987number}. The proof involves several technical lemmas, which are essentially built on the method of Sapozhenko~\cite{sapozhenko1987number}, and will be postponed to Section~\ref{sec:cont}.

For $\mathcal{D}\in\{\mathcal{L}_d,\ \mathcal{L}_{d-1}\}$, and a set $A\subseteq \mathcal{D}$, we write $N(A)$ for the set of vertices that are neighbours of a vertex in $A$, and let $[A]=\{v\in \mathcal{D}: N(v)\subseteq N(A)\}$
be the \textit{closure} of $A$.
\begin{lemma}
\label{lem:container}
For integers $a, b\geq 1$, let 
\[
\mathcal{G}(a, b)=\{A\subseteq \mathcal{D}: A \text{ 2-linked, } |[A]|=a, |N(A)|=b\}.
\]
Then there exists constants $C_0, C_1>0$, such that for all $\lambda\geq C_0\ln d/d^{1/3}$, and all $a\leq \frac{1}{2}\binom{n}{d}$, 
\[
\sum_{A\in \mathcal{G}(a, b)}\frac{\lambda^{|A|}}{(1+\lambda)^b}\leq \binom{n}{d}\exp\left(-\frac{C_1(b-a)\ln d}{d^{2/3}}\right).
\]

\end{lemma}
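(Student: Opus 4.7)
The plan is to follow the Sapozhenko--Galvin container strategy, adapted to the bipartite graph $\mathcal{B}(n,d)$. Two structural features of $\mathcal{B}(n,d)$ will drive the estimates: it is $d$-regular, and any two distinct vertices in $\mathcal{L}_d$ (respectively $\mathcal{L}_{d-1}$) share at most one common neighbour in the opposite class (corresponding to swapping a single element). The central idea is to associate to each $A \in \mathcal{G}(a,b)$ a small approximating pair $(F, S)$, where $F \subseteq [A]$ is a sparse ``skeleton'' and $S \supseteq N(A)$ is an outer cover, bound the number of such pairs, and then for each fixed pair bound the weighted sum $\sum_A \lambda^{|A|}/(1+\lambda)^b$ over all $A$ with that approximation.

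First I would set up the approximation scheme. Fix a slowly growing granularity $\psi = \psi(d)$ (think $\psi \asymp d^{1/3}$), and for each $A$ construct $F \subseteq [A]$ iteratively: starting from a root vertex, repeatedly add vertices of $[A]$ so that every vertex of $[A]$ has at least $d-\psi$ of its $d$ neighbours in $N(F)$. Since $A$ is 2-linked and $[A]$ contains $A$, one can take $F$ to be 2-linked as well. The standard tree-growing argument then bounds the number of choices of $F$ of a given size by $\binom{n}{d}$ times a polynomial-in-$d$ factor per step (using the $d$-regularity and codegree $\leq 1$). The second component $S$ is obtained from $N(F)$ by adjoining at most $|N(A) \setminus N(F)|$ additional vertices; the ``small missed'' region at each vertex of $[A]$ is then controlled by $\psi$.

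Next, for each approximating pair $(F, S)$, the relevant weighted sum factorises as
\[
\sum_{A \,:\, (F,S)\text{ approximates }A} \frac{\lambda^{|A|}}{(1+\lambda)^b}
 \leq \frac{(1+\lambda)^{|[F]|}}{(1+\lambda)^b}\cdot\prod_{v \in S \setminus N(F)} g(\lambda,d),
\]
where $g(\lambda,d)$ is a small factor recording the cost of each boundary vertex. Using the codegree-$1$ property, one can show that each $v \in S \setminus N(F)$ either forces $A$ to include a specific vertex (a factor $\lambda$), or forces it to omit one (a factor $1/(1+\lambda)$), and in either case the contribution per boundary vertex is at most $\exp(-\Omega(\ln d / d^{2/3}))$ once one uses $\lambda \geq C_0 \ln d / d^{1/3}$. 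Combining this with the $\binom{n}{d}$ choices for the root and summing a geometric-type tail over the number of boundary vertices yields the claimed bound.

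The main obstacle is the calibration step that converts the gap $b - a$ into the stated exponent $-C_1(b-a)\ln d / d^{2/3}$. Here one has to handle two regimes at once: either the deficiency $|N(A)| - |[A]|$ is spread over many vertices each losing only a few edges to $F$, in which case the $\lambda$-weight savings dominate; or it is concentrated on few vertices losing many edges, in which case the counting of approximations is where the savings come from. Balancing these two regimes is exactly what forces the choice $\psi \asymp d^{1/3}$ and pins down both the threshold $\lambda \geq C_0 \ln d / d^{1/3}$ and the final exponent. A secondary technical point is ensuring that the hypothesis $a \leq \tfrac{1}{2}\binom{n}{d}$ is used to avoid degenerate containers $[F]$ of near-maximum size, where the trivial $(1+\lambda)^{|[F]|}$ factor would overwhelm the savings; this is the standard ``small side'' reduction in Sapozhenko-type arguments.
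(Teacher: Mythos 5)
Your overall strategy is the right one, and it is in fact the paper's: the lemma is proved by combining Sapozhenko-type approximation lemmas (imported verbatim in Section~7 from Galvin and Galvin--Tetali) with the isoperimetric inequalities of Lemma~\ref{lem:satu}, and then calibrating parameters. One correction on where the hypothesis $a\le\frac12\binom{n}{d}$ enters: its role is not to exclude ``degenerate containers'' but to supply expansion, namely $t:=b-a\ge b/(2d)$ via Lemma~\ref{lem:satu}(iii). This is what allows every entropy term proportional to $b$ to be rewritten as a term proportional to $t$ and then beaten by the saving $(1+\lambda)^{-\gamma t}$.

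The genuine gap is in your enumeration of the skeletons $F$. A cover $F\subseteq[A]$ such that every vertex of $[A]$ has at least $d-\psi$ of its neighbours in $N(F)$ has size of order $(b/\psi)\ln d$, so counting such $F$ by a tree-growing argument at ``a polynomial-in-$d$ factor per step'' costs $\exp\left(O(|F|\ln d)\right)=\exp\left(O\left((b/\psi)\ln^2 d\right)\right)$. Since $t$ can be as small as $b/(2d)$, this is $\exp\left(\Omega\left(t(d/\psi)\ln^2 d\right)\right)$, which for every choice of $\psi\le d$ --- and certainly for your $\psi\asymp d^{1/3}$ --- overwhelms the available saving $\exp\left(-\Omega\left(t\ln d/d^{2/3}\right)\right)$. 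The heart of Sapozhenko's lemma is precisely the device that avoids this: the approximating pair is encoded so that its entropy is charged to the excess $t$ rather than to $|F|$ or $b$. Concretely, in the first approximation lemma the number of approximations is bounded by $\binom{n}{d}$ times $\exp\left(O(t\ln^2 d/d)\right)$, coming from binomial coefficients of the shape $\binom{O(b\ln d/d)}{\le O(t\ln d/d)}$: most of the skeleton is forced, and one only pays for the $O(t\ln d/d)$ exceptional choices. You correctly flag the conversion of $b-a$ into the final exponent as the main obstacle, but the resolution is this encoding (together with the two-stage structure, with thresholds $\varphi=d/2$ and $\psi=d^{2/3}$ rather than a single granularity $d^{1/3}$, and a final optimization over a parameter $\gamma$), not a balancing of the two regimes you describe; without it the bound obtained is weaker than trivial.
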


Next we present some isoperimetric inequalities on $\mathcal{B}(n, d)$, which can be easily derived from direct applications of the Kruskal-Katona Theorem~\cite{kruskal1963number, katona2009theorem}, and the symmetry of $\mathcal{L}_d$ and $\mathcal{L}_{d-1}$. Here we omit the detailed proof.
 
\begin{lemma}\label{lem:satu}
Let $d$ be sufficiently large and $S\subseteq \mathcal{L}_d$ (or $S\subseteq \mathcal{L}_{d-1}$).
\begin{itemize}
\item[\rm (i)] If $|S|\leq d/4$, then $|N(S)|\geq d|S| - |S|^2/2$.
\item[\rm (ii)] If $|S|\leq d^4$, then $|N(S)|\geq d|S|/6$.
\item[\rm (iii)] If $|S|\leq \frac12\binom{n}{d}$, then $|N(S)|\geq \left(1 + \frac{1}{2d - 1}\right)|S|$.
\end{itemize}
\end{lemma}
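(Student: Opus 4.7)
The plan is to reduce each part to a standard tool: (i) to a two-term Bonferroni inequality, and (ii), (iii) to the Kruskal--Katona theorem in Lov\'asz's continuous form. A preliminary reduction exploits the fact that, since $n = 2d - 1$, the complementation $A \mapsto [n] \setminus A$ is a graph automorphism of $\mathcal{B}(n, d)$ that swaps $\mathcal{L}_d$ and $\mathcal{L}_{d-1}$. Hence I may assume $S \subseteq \mathcal{L}_d$, so that $N(S) \subseteq \mathcal{L}_{d-1}$ is exactly the lower shadow of $S$.

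For (i), each $A \in S$ has $|N(A)| = d$, and for distinct $A, B \in S$ the intersection $N(A) \cap N(B)$ has size $1$ when $|A \cap B| = d - 1$ and $0$ otherwise. Two-term Bonferroni then yields
\[
|N(S)| \;\geq\; \sum_{A \in S} |N(A)| \;-\; \sum_{\{A,B\} \subseteq S} |N(A) \cap N(B)| \;\geq\; d|S| - \binom{|S|}{2} \;\geq\; d|S| - \frac{|S|^2}{2}.
\]

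For (ii) and (iii), parametrize $|S| = \binom{x}{d}$ with real $x \geq d$ (using the convention $\binom{x}{d} = x(x-1)\cdots(x-d+1)/d!$); Kruskal--Katona then gives $|N(S)| \geq \binom{x}{d-1}$, i.e., $|N(S)|/|S| \geq d/(x - d + 1)$. For (ii), the condition $|S| \leq d^4 < \binom{d+5}{d} = \binom{d+5}{5}$ (valid for $d$ large) forces $x \leq d + 5$, so the ratio is at least $d/6$. For (iii), the target $1 + 1/(2d - 1) = 2d/(2d - 1)$ is equivalent to $x \leq 2d - 3/2$, which reduces to verifying $\binom{2d - 3/2}{d} \geq \tfrac{1}{2}\binom{2d - 1}{d}$. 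This factors as
\[
\frac{\binom{2d - 3/2}{d}}{\binom{2d - 1}{d}} \;=\; \prod_{m = d}^{2d - 1} \left(1 - \frac{1}{2m}\right) \;\geq\; \left(1 - \frac{1}{2d}\right)^{d} \;\longrightarrow\; e^{-1/2} \;>\; \frac{1}{2},
\]
which holds for all sufficiently large $d$.

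The plan contains no genuine obstacle: everything is routine once Kruskal--Katona is available. The only subtle point is the numerical cushion in (iii), where the margin $e^{-1/2} \approx 0.606$ against the threshold $1/2$ is comfortable enough that the ``$d$ sufficiently large'' clause easily absorbs any lower-order correction.
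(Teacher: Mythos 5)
Your proof is correct, and it follows essentially the route the paper indicates (the paper omits the details, citing only ``direct applications of the Kruskal--Katona Theorem and the symmetry of $\mathcal{L}_d$ and $\mathcal{L}_{d-1}$''): the complementation reduction and the Lov\'asz-form shadow bounds for (ii) and (iii) are exactly that, and your two-term Bonferroni argument for (i), using that two distinct $d$-sets share at most one common $(d-1)$-subset, is a clean and valid way to get the remaining part.
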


We also use the following lemma from \cite{galvin2011threshold} that bounds the number of $k$-linked subsets of a $d$-regular graph.
\begin{lemma}[Galvin~\cite{galvin2011threshold}]\label{lem:numcomp}
Let $\Sigma=(V, E)$ be a $d$-regular graph with $d\geq 2$. The number of $k$-linked subsets of $V$ of size $t$ which contain a fixed vertex is at most $\exp(3kt\ln d)$.
\end{lemma}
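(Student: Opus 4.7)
The plan is to reduce to a standard tree-counting estimate applied to the auxiliary graph $\Sigma^k$. A $k$-linked subset of $V$ is, by definition, a connected vertex subset of $\Sigma^k$, so it suffices to control the maximum degree of $\Sigma^k$ and then invoke the classical bound on connected vertex sets of bounded-degree graphs.

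First I would bound $\Delta(\Sigma^k)$. In the $d$-regular graph $\Sigma$, the number of vertices at distance exactly $j\geq 1$ from a fixed vertex $v$ is at most $d(d-1)^{j-1}$, so
\[
\Delta(\Sigma^k) \;\leq\; \sum_{j=1}^{k} d(d-1)^{j-1}.
\]
A short induction on $k$, handling $d=2$ separately (where the sum equals $2k\leq 2^k$) and using $(d-1)^{k+1}+2d^k-1\leq d^{k+1}$ in the inductive step when $d\geq 3$, shows that this sum is at most $d^k$ whenever $d\geq 2$.

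Second, I would appeal to the standard fact that in any graph of maximum degree $\Delta$, the number of connected vertex subsets of size $t$ containing a fixed vertex is at most $(e\Delta)^{t-1}$. One clean way to obtain this is to note that every such subset possesses at least one spanning tree, and the number of rooted plane subtrees of size $t$ in an infinite $\Delta$-ary tree is the Catalan-type quantity $\frac{1}{(\Delta-1)t+1}\binom{\Delta t}{t}\leq (e\Delta)^{t-1}$. Applied to $\Sigma^k$ with $\Delta\leq d^k$, the number of $k$-linked subsets of size $t$ containing a fixed vertex is bounded by
\[
(e\, d^k)^{t-1} \;\leq\; (ed^k)^t \;=\; \exp\bigl(t + kt\ln d\bigr).
\]

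Finally I would check the constant: we need $t+kt\ln d\leq 3kt\ln d$, i.e.\ $\ln d\geq 1/(2k)$, which holds because $d\geq 2$ and $k\geq 1$ give $\ln d\geq \ln 2 > 1/2 \geq 1/(2k)$. The whole argument is routine; the only mildly delicate step is the tree-counting bound $(e\Delta)^{t-1}$, which is the real content, together with the verification that the crude estimate $\Delta(\Sigma^k)\leq d^k$ is clean enough to yield the advertised constant $3$ in the exponent.
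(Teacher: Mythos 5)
Your proof is correct and is essentially the standard argument used in the cited source (Galvin~\cite{galvin2011threshold}): pass to $\Sigma^k$, bound its maximum degree by $d^k$, and invoke the classical $(e\Delta)^{t-1}$ bound on connected vertex subsets via subtrees of the infinite $\Delta$-ary tree. The paper itself only cites the lemma without proof, and your write-up fills in exactly the intended argument, with the final constant check $1\leq 2k\ln d$ handled correctly.
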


\begin{cor}\label{cor:2link}
The number of 2-linked subsets of $S\subseteq \mathcal{L}_d$ (or $S\subseteq \mathcal{L}_{d-1}$) of size $t$, which contain a given vertex $v$ is at most $\exp(6t\ln d)$.
\end{cor}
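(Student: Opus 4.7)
The corollary is a direct specialization of Lemma~\ref{lem:numcomp}, so the plan is simply to identify the right graph and apply that lemma with $k=2$. The bipartite graph $\mathcal{B}(n,d)$ is $d$-regular: each set in $\mathcal{L}_d$ has exactly $d$ subsets of size $d-1$ as neighbours, and each set in $\mathcal{L}_{d-1}$ has exactly $d$ supersets of size $d$ inside $[n]$ (since $n=2d-1$, the number of elements available to add is $n-(d-1)=d$). So taking $\Sigma=\mathcal{B}(n,d)$ in Lemma~\ref{lem:numcomp} is legitimate.

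Next, I would observe that a $2$-linked subset of $\mathcal{L}_d$ (or of $\mathcal{L}_{d-1}$) in the sense defined in the paper is, by definition, a set $A$ that is connected in $\mathcal{B}(n,d)^2$. In particular, any such $A$ is a $2$-linked subset of the full vertex set $V(\mathcal{B}(n,d))=\mathcal{L}_d\cup\mathcal{L}_{d-1}$. Hence the family of $2$-linked subsets of $\mathcal{L}_d$ of size $t$ containing a fixed vertex $v$ injects into the family of $2$-linked subsets of $V$ of size $t$ containing $v$, and it suffices to bound the latter.

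Plugging $k=2$ into Lemma~\ref{lem:numcomp} gives a bound of $\exp(3\cdot 2\cdot t\ln d)=\exp(6t\ln d)$ on the number of $2$-linked subsets of $V(\mathcal{B}(n,d))$ of size $t$ containing a prescribed vertex, which is exactly the desired bound. By the symmetry of the two layers the same argument handles the case $S\subseteq\mathcal{L}_{d-1}$.

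There is no real obstacle here; the only point worth stating carefully is the verification that $\mathcal{B}(n,d)$ is indeed $d$-regular (which uses $n=2d-1$) so that Lemma~\ref{lem:numcomp} applies with the parameter appearing in the exponent equal to $d$ rather than some larger quantity. Once that is noted, the corollary is immediate.
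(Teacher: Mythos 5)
Your proof is correct and is exactly the intended derivation: the paper states the corollary as an immediate consequence of Lemma~\ref{lem:numcomp} applied to the $d$-regular graph $\mathcal{B}(n,d)$ with $k=2$, and your observations (that $\mathcal{B}(n,d)$ is $d$-regular because $n=2d-1$, and that $2$-linked subsets of one layer are in particular $2$-linked subsets of the whole vertex set) are precisely the points needed to make that application legitimate.
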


\section{The typical structure of independent sets in $\mathcal{B}(n, d)$}\label{sec: LB}
%
Let $t=N2^{-d}$. Since $\omega(1)=t=o(\sqrt{N})$, we have 
\[
\binom{N}{t}=(1+o(1))\frac{1}{\sqrt{2\pi t}}\left(\frac{Ne}{t}\right)^t,
\]
and therefore
\begin{equation}
\begin{split}
\binom{N}{t}2^{N - dt}&=(1+o(1))\frac{1}{\sqrt{2\pi t}}\left(\frac{Ne}{t}\right)^t2^{N - dt}
=(1+o(1))\frac{1}{\sqrt{2\pi t}}\left(e2^d\right)^t2^{N - dt}\\
&
\geq (1+o(1))2^N\exp\left(N2^{-d}\right)\exp(-d/2).
\end{split}    
\end{equation}

Take a $t$-element subset $T$ of $\mathcal{L}_d$ uniformly at random. It is not hard to show that
\begin{equation*}
\begin{split}
\mathbb{E}(|N(T)|)
&=\sum_{v\in \mathcal{L}_{d-1}}\mathbb{P}(v\in N(T))
=\sum_{v\in \mathcal{L}_{d-1}}\mathbb{P}(|N(v)\cap T|\geq 1)\\
&=N\left(d\binom{N-1}{t-1}\Big/\binom{N}{t} -\binom{d}{2}\binom{N-2}{t-2}\Big/\binom{N}{t} + o\left(\frac{1}{N}\right)\right)
=dt - \binom{d}{2}\frac{t^2}{N} + o(1),
\end{split}
\end{equation*}
 which goes to infinity as $d$ increases.
Applying standard probabilistic methods (with $\varepsilon=o(dt/N)$), one can show that there are $(1 - o(1))\binom{N}{t}$ number of $t$-element subsets of $L_{d}$ with \begin{equation}\label{eq:neigh}
\begin{split}
|N(T)|& \geq (1 - \varepsilon)\mathbb{E}(|N(T)|)
\geq (1 - \varepsilon)\left(dt - \binom{d}{2}\frac{t^2}{N} + o(1)\right)=dt -\left(1 + o(1)\right)\binom{d}{2}\frac{t^2}{N}\\
&=dt -\left(1 + o(1)\right)\binom{d}{2}N2^{-2d}.
\end{split}
\end{equation}

Let $\mathcal{T}$ be the family of $t$-element subsets of $\mathcal{L}_{d}$ satisfying (\ref{eq:neigh}), and we have $|\mathcal{T}|\geq (1 - o(1))\binom{N}{t}$. Then the number of independent sets $I$ with $|I\cap \mathcal{L}_d|=t$ is at least
\[
\begin{split}
\sum_{T\in \mathcal{T}}2^{N - |N(T)|}
&\geq (1 - o(1))\binom{N}{t}2^{N - dt}\exp\left(\left(\ln 2 + o(1)\right)\binom{d}{2}N2^{-2d}\right)\\
&\geq (1+o(1))2^N\exp\left(N2^{-d} + \left(\ln 2 + o(1)\right)\binom{d}{2}N2^{-2d}\right).
\end{split}
\]
By symmetry, we obtain the same lower bound for the number of independent sets $I$ with $|I\cap \mathcal{L}_{d-1}|=t$. Since the number of independent sets $I$ with both $|I\cap \mathcal{L}_d|=t$ and $|I\cap \mathcal{L}_{d-1}|=t$ is tiny, we obtain the following.

\begin{exmp}\label{exmp:lb}
Let $t=N2^{-d}$. The number of independent sets $I$ of $\mathcal{B}(n, d)$ with either $|I\cap \mathcal{L}_d|=t$ or $|I\cap \mathcal{L}_{d-1}|=t$ is at least
\[
2(1+o(1))2^N\exp\left(N2^{-d} + \left(\ln 2 + o(1)\right)\binom{d}{2}N2^{-2d}\right).
\]
\end{exmp}

We believe that a very careful analysis on these independent sets might give a matching lower bound for Theorem~\ref{thm:indset}. In other words, we expect that a typical independent set in $\mathcal{B}(n, d)$ have about $N2^{-d}$ vertices in one of the classes, and most of them are at distance at least 4 from each other, except for about $\Theta(d^{3/2})$ pairs, which are at distance 2 from each other. As we did not see an easy argument justifying this sharper claim, we did not push our argument further.
\newline

\noindent\textit{Proof of Theorem~\ref{thm:tystru}}.
Let $\mathcal{I}$ be the set of independent sets $I$ in $\mathcal{B}(n, d)$ with $|I\cap \mathcal{L}_d|\leq N/2$.
For each $I\in \mathcal{I}$, let
\[
\mathcal{LC}(I) =\{B \subseteq I\cap \mathcal{L}_d \mid B \text{ is a 2-linked component, and } |B|\geq 3\},
\]
and $m(I):=\sum_{B\in \mathcal{LC}(I)}|N(B)|$.
For each $0\leq i \leq N$, let $\mathcal{U}_i$ be the collection of $I\in \mathcal{I}$ with $m(I)=i$. Clearly, we have
$\mathcal{I} = \mathcal{U}_0 \cup \bigcup_{i=3d-3}^{N}\mathcal{U}_i.$
From this and the symmetry of $\mathcal{L}_d$ and $\mathcal{L}_{d-1}$, to prove Theorem~\ref{thm:tystru}, it is sufficient to show that  
\begin{equation*}
\sum_{i=3d-3}^{N}|\mathcal{U}_i|=o(|\mathcal{U}_0|).
\end{equation*}

For each $3d-3\leq i \leq N$, we define a bipartite graph $G_i$ with classes $\mathcal{U}_0$ and $\mathcal{U}_i$ in the following way. For $I\in \mathcal{U}_i$ and $J \in \mathcal{U}_0$, two sets $I$, $J$ are adjacent if $J$ could be obtained from $I$ by removing all its vertices in $\bigcup_{B\in \mathcal{LC}(I)}B$, and adding some subset of $\bigcup_{B\in \mathcal{LC}(I)}N(B)$. Observe that by definition 
\begin{equation}\label{eq:degree1}
d_{G_i}(I)=2^{m(I)}=2^i \quad \text{for all } I\in \mathcal{U}_i.
\end{equation}

On the other side, the degree of a set in $\mathcal{U}_0$ is determined by the number of large 2-linked components. For $3d-3\leq j\leq N/2$, let $\alpha(j)$ be the number of 2-linked components $B$ of $\mathcal{L}_d$ with $|N(B)|=j$. If $j\leq d^4$, then by Lemma~\ref{lem:satu}(ii), we have $|B|\leq 6j/d$. Using Corollary~\ref{cor:2link}, we obtain that
\begin{equation*}
\alpha(j)\leq N\exp(36j\ln d/d) \quad \text{for } j\leq d^4.
\end{equation*}
If $j\geq d^4$, then by Lemma~\ref{lem:satu}(iii) we have $j-|B|\geq j/(2d)$. Using Lemma~\ref{lem:container} with $\lambda=1$, we obtain that 
\begin{equation*}
\begin{split}
\alpha(j) 
&\leq \sum_{j-a\geq j/(2d)}|\mathcal{G}(a, j)|\leq \sum_{j-a\geq j/(2d)}2^jN\exp\left(-\frac{C_1(j-a)\ln d}{d^{2/3}}\right)\\
&\leq 2^j N\sum_{a\leq N/2}\exp\left(-\frac{C_1j\ln d}{2d^{5/3}}\right)
\leq 2^j N^2\exp\left(-\frac{C_1j\ln d}{2d^{5/3}}\right)
\leq 2^j \exp\left(-\frac{C_1j\ln d}{4d^{5/3}}\right) \quad \text{for } j\geq d^4,
\end{split}
\end{equation*}
where the last inequality follows from $N\leq 2^{2d}$ and $j\ln d/d^{5/3} \geq d^{7/3}\ln d \gg d$.

Hence, for $i\geq 3d-3$, the number of disjoint 2-linked components $B_1, \ldots, B_{\ell},\ldots$, for which $|B_\ell|\geq 3$ for every $\ell\geq 1$ and $\sum_{\ell}|N(B_{\ell})|=i$, is
\begin{equation*}
\begin{split}
\beta(i)
&\leq\sum_{i=i_1+ i_2}
\left(\sum_{\substack{\sum_{\ell}j_{\ell}=i_1\\ 3d-3\leq j_{\ell}\leq d^4}}\prod_{\ell} \alpha(j_\ell)\right)
\left(\sum_{\substack{\sum_{\ell} k_{\ell}=i_2\\ k_{\ell}\geq d^4}}\prod_{\ell} \alpha(k_\ell)\right)\\
&\leq
\sum_{i=i_1+ i_2}
\left(\sum_{\substack{\sum_{\ell}j_{\ell}=i_1\\ 3d-3\leq j_{\ell}\leq d^4}}N^{\frac{i_1}{3d-3}}\exp(36i_1\ln d/d)\right)
\left(\sum_{\substack{\sum_{\ell} k_{\ell}=i_2\\ k_{\ell}\geq d^4}}2^{i_2} \exp\left(-\frac{C_1i_2\ln d}{4d^{5/3}}\right)\right)\\
&\leq
\sum_{i=i_1+ i_2}
\left(2^{O(i_1\ln d/d)}2^{0.7i_1}\exp\left(\frac{36i_1\ln d}{d}\right)\right)
\left(2^{O(i_2\ln d/d^4)}2^{i_2} \exp\left(-\frac{C_1i_2\ln d}{4d^{5/3}}\right)\right)\\
&\leq
 \sum_{i=i_1+ i_2}
2^{0.8i_1}\cdot 2^{i_2} \exp\left(-C'i_2\ln d/d^{5/3}\right),
\end{split}
\end{equation*}
for some constant $C'>0$.
Since $N\leq 2^{2d}$, we further obtain that
\begin{equation}\label{eq:beta}
\beta(i)
\leq 
\begin{cases}
2^{0.8i}  &  \text{for }i< d^4,\\
2^{0.8i}  + N2^{i}\exp\left(-C'd^4\ln d/d^{5/3}\right)
=o(2^i/N) &  \text{for }i\geq d^4.
\end{cases}
\end{equation}
Note that for every set $J\in \mathcal{U}_0$, we have $d_{G_i}(J)\leq \beta(i)$. Therefore by~(\ref{eq:degree1}) and~(\ref{eq:beta}), we obtain that 
\[
\sum_{i=3d-3}^N|\mathcal{U}_i|\leq \sum_{i=3d-3}^N|\mathcal{U}_0|\beta(i)2^{-i}
\leq |\mathcal{U}_0|\left(\sum_{i=3d-3}^{d^4}2^{-0.2i} + \sum_{i=d^4}^{N}o(1/N)\right)=o(|\mathcal{U}_0|),
\]
which completes the proof.


\qed 
\newline

\section{Polymer models and cluster expansions}\label{sec:poly}
In this section, we introduce polymer models and cluster expansion in the language of graph theory. For more general information and applications on polymer models, see~\cite{fernandez2007cluster, jenssen2019independent, kotecky1986cluster}.

Consider a finite set $\mathcal{P}$, and an unoriented graph $H_{\mathcal{P}}$ defined on $\mathcal{P}$, in which every vertex has a loop edge and there is no multiple edge.
The vertices $S \in \mathcal{P}$ are called \textit{polymers} for historical reasons in physics. 
Two polymers $S, S'$ are \textit{adjacent}, denoted by $S\sim S'$, if there is an edge $SS'$ in $H_{\mathcal{P}}$.
In particular, every polymer is adjacent to itself.
We equip each polymer $S$ with a complex-valued weight $w(S)$.
Such a weighted graph $(H_{\mathcal{P}}, w)$ is referred as \textit{the polymer model}. For convenience, sometimes we simply  write $(\mathcal{P},w)$ or $\mathcal{P}$ for the polymer model.
Let $\Omega_{\mathcal{P}}$ be the collection of independent sets, where loops are allowed, of $H_{\mathcal{P}}$, including the empty set. 
The \textit{polymer model partition function}
\begin{equation}\label{def:partitionfunction}
  \Xi(\mathcal{P}, w)=\sum_{\Lambda\in \Omega_{\mathcal{P}}}\prod_{S\in \Lambda}w(S)
\end{equation}
is essentially a weighted independent polynomial of the polymer model $(H_{\mathcal{P}}, w)$.

Let $\Gamma=(S_1, S_2, \ldots, S_k)$ be a non-empty ordered tuple  of polymers, where repetitions are allowed. 
Denote by $H_{\mathcal{P}}(\Gamma)$ the simple graph defined on the multiset $\{S_1, S_2, \ldots, S_k\}$ with edge set $E=\{S_iS_j: S_i\sim S_j \text{ in } H_{\mathcal{P}}\}$.
We say such a tuple $\Gamma$ is a \textit{cluster} if the graph $H_{\mathcal{P}}(\Gamma)$ is connected.
For example, for two adjacent polymers $S, S'$, the 3-tuple $\Gamma=(S, S', S)$ is a cluster with $H_{\mathcal{P}}(\Gamma)=K_3$, where $K_m$ denotes the complete graph on $m$ vertices.
For a simple graph $H$, let 
\[
\phi(H)=\frac{1}{|V(H)|!}\sum (-1)^{e(F)},\]
where the sum is over all connected subgraphs $F$ of $H$ such that $F$ contains all the vertices of $H$. The function $\phi(H)$ is often referred as the \textit{Ursell function}. The \textit{weight function of a cluster} $\Gamma$ is defined as follows:
\begin{equation}\label{def:cluweight}
   w(\Gamma):=\phi(H_{\mathcal{P}}(\Gamma))\prod_{S\in \Gamma}w(S).
\end{equation}

Let $\mathcal{C}$ be the set of all clusters. The \textit{cluster expansion} is the formal power series of the logarithm
of the partition function $\Xi(\mathcal{P}, w)$, which takes the form\footnote{For details of the cluster expansion, we refer  readers to Chapter 5 of \cite{friedli2017statistical}.}
\begin{equation}\label{eq:cluexp}
 \ln\Xi(\mathcal{P}, w) = \sum_{\Gamma\in\mathcal{C}}w(\Gamma). 
\end{equation}
Note that many copies of the same polymer may appear in a cluster. As a consequence, the cluster expansion is an infinite series even for a finite polymer model.
A sufficient condition for the convergence of the cluster expansion is given by Koteck\'{y} and Preiss~\cite{kotecky1986cluster}.

\begin{thm}[Convergence of the cluster expansion~\cite{kotecky1986cluster}]\label{KPconv}
Let $f: \mathcal{P} \rightarrow [0, \infty)$ and $g: \mathcal{P} \rightarrow [0, \infty)$ be two functions. Suppose that for all polymers $S_0\in\mathcal{P}$,
\begin{equation}\label{eq:kot-thm1}
\sum_{S\sim S_0}|w(S)|\exp\left(f(S)+g(S)\right)\leq f(S_0),
\end{equation}
then the cluster expansion~(\ref{eq:cluexp}) converges absolutely. Moreover, if we let $g(\Gamma)=\sum_{S\in\Gamma}g(S)$ and write $\Gamma\sim S$ if there exists $S'\in \Gamma$ so that $S\sim S'$, then for all polymers $S$,
\begin{equation}\label{eq:kot-thm2}
\sum_{\Gamma\in \mathcal{C}, \Gamma\sim S}|w(\Gamma)|\exp\left(g(\Gamma)\right)\leq f(S).
\end{equation}
\end{thm}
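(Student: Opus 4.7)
The plan is to reduce both conclusions of the theorem to a single master inequality: for every polymer $S_0$,
\begin{equation*}
\Psi(S_0) \;:=\; \sum_{\Gamma \in \mathcal{C},\, \Gamma \sim S_0} |w(\Gamma)|\exp(g(\Gamma)) \;\leq\; f(S_0).
\end{equation*}
This is exactly (\ref{eq:kot-thm2}); moreover, since every cluster $\Gamma$ is non-empty and hence adjacent (via the self-loop convention) to each of its polymers, the bound $\sum_{\Gamma\ni S_0} |w(\Gamma)| \leq f(S_0)$ combined with $|\mathcal{P}|<\infty$ also yields absolute convergence of (\ref{eq:cluexp}) through $\sum_{\Gamma}|w(\Gamma)| \leq \sum_{S_0\in\mathcal{P}} f(S_0)$. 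Thus the whole content of the theorem is packaged in this one inequality.

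The core combinatorial input is the Penrose tree-graph inequality: for any connected graph $H$ on $n$ vertices,
\begin{equation*}
n!\cdot |\phi(H)| \;\leq\; \#\{\text{spanning trees of } H\},
\end{equation*}
proved by fixing a total order on $V(H)$ and constructing a sign-reversing involution on the connected spanning subgraphs outside a specified ``Penrose-admissible'' class of spanning trees. Applied to $H=H_{\mathcal{P}}(\Gamma)$, this converts the Ursell-weighted sum defining $\Psi(S_0)$ into a sum over finite rooted labeled trees whose root is labeled $S_0$, whose other vertices are labeled by polymers, and whose edges respect $\sim$ in $H_{\mathcal{P}}$; the factor $1/n!$ from Penrose neatly cancels the $n!$ orderings of each size-$n$ cluster tuple in (\ref{def:cluweight}).

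I would then peel the tree from its leaves. Hypothesis (\ref{eq:kot-thm1}) states exactly $\sum_{S\sim v}|w(S)|\exp(f(S)+g(S)) \leq f(v)$, and the extra factor $\exp(f(S))$ is precisely the book-keeping term that pays for the subtree hanging below $S$. A routine induction on tree size (equivalently, an iterated generating-function identity) collapses the entire tree sum and yields
\begin{equation*}
\Psi(S_0) \;\leq\; |w(S_0)|\exp(g(S_0))\cdot\exp(f(S_0)) \;\leq\; f(S_0),
\end{equation*}
where the last step is (\ref{eq:kot-thm1}) specialised to the single summand coming from the self-loop $S_0\sim S_0$. The main obstacle will be step one: establishing the tree-graph inequality with its sharp $1/n!$ constant and matching it cleanly to the ordered-tuple convention for clusters used in (\ref{def:cluweight}). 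The Penrose involution is conceptually clean but combinatorially delicate, and it is the place where the polymer-model set-up (loops allowed, ordered tuples, no multiple edges) has to be tracked carefully; once it is in hand, the inductive tree-sum bound and the terminal appeal to the KP hypothesis are essentially automatic.
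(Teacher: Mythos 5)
First, a point of comparison: the paper does not prove this theorem at all — it is imported as a black box from Koteck\'y--Preiss \cite{kotecky1986cluster} (with a pointer to \cite{friedli2017statistical} for details) — so there is no internal proof to measure you against. Your chosen route, namely the Penrose tree-graph inequality $n!\,|\phi(H)|\leq\#\{\text{spanning trees of }H\}$ followed by an inductive collapse of the resulting rooted-tree sum, is the standard modern proof of the Koteck\'y--Preiss criterion, and the role you assign to the $\exp(f)$ factor (paying for the subtrees hanging below each vertex) is exactly right.

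There is, however, a concrete error in your closing chain. The tree-peeling induction does \emph{not} yield $\Psi(S_0)\leq |w(S_0)|\exp(g(S_0)+f(S_0))$; that inequality is false in general, since $\Psi(S_0)$ sums over every cluster merely \emph{adjacent} to $S_0$ (in particular all clusters containing any neighbour of $S_0$), and one can make $w(S_0)$ tiny while a neighbour carries large weight. What the induction actually delivers is the rooted bound
\[
\sum_{\Gamma\in\mathcal{C},\ \Gamma\ni S}|w(\Gamma)|\exp\left(g(\Gamma)\right)\ \leq\ |w(S)|\exp\left(f(S)+g(S)\right)\qquad\text{for each fixed polymer }S,
\]
proved by induction on the number of polymers in $\Gamma$ (working with truncated sums so everything is finite, and using $\sum_{k\geq 0}\frac{1}{k!}\bigl(\sum_{S'\sim S}\cdots\bigr)^k=\exp\bigl(\sum_{S'\sim S}\cdots\bigr)\leq \exp(f(S))$ to handle the unbounded number of subtrees at the root). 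One then writes $\Psi(S_0)\leq\sum_{S\sim S_0}\sum_{\Gamma\ni S}|w(\Gamma)|\exp(g(\Gamma))$ and applies (\ref{eq:kot-thm1}) \emph{once more to the full outer sum} to reach $f(S_0)$. Your appeal to ``(\ref{eq:kot-thm1}) specialised to the single summand coming from the self-loop'' is exactly backwards: the hypothesis is needed for the entire sum over $S\sim S_0$, not just the diagonal term. With that step repaired — and with the Penrose involution and the multiset/ordered-tuple bookkeeping actually carried out rather than asserted — the outline becomes a correct proof.
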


\section{Main theorem}~\label{sec:res}
The \textit{independence polynomial} of a graph $G$ is \[
Z_G(\lambda):=\sum_{I\in \mathcal{I}(G)}\lambda^{|I|}.
\]
When the underlying graph is clear, we simply write it as $Z(\lambda)$. 
The independence polynomial can be viewed as the partition function of the \textit{hard-core model} from statistical physics: a probability distribution on independent sets of $G$ weighted by the fugacity parameter $\lambda$, in which each independent set $I$ is chosen with probability $\lambda^{|I|}/Z(\lambda)$. The hard core model plays a vital role in the study of independent sets and has been extensively studied by many researchers in recent years. For example, Davies, Jenssen, Perkins, and Roberts~\cite{davies2017independent}, strengthening a classical result for independent sets of $d$-regular graphs, showed that a union of copies of $K_{d,d}$ maximizes the independence polynomial of a $d$-regular graph;
Galvin~\cite{galvin2011threshold} and Jenssen and Perkins~\cite{jenssen2019independent} studied the typical structure of independent sets of the hypercube drawn from the hard-core model for a wide range of parameters $\lambda$.

Recall that $n=2d-1$. We define a polymer model on $\mathcal{B}(n, d)$ as follows. 
For $\mathcal{D}\in \{\mathcal{L}_d,\ \mathcal{L}_{d-1}\}$, let 
\begin{equation}\label{def:polymer}
\mathcal{P}_{\mathcal{D}}:=\left\{S\subseteq \mathcal{D}: S \text{ is non-empty and 2-linked, } |[S]|\leq \frac12\binom{n}{d}\right\}
\end{equation}
be the set of polymers. 
Two polymers $S$, $S'$ are adjacent if $S\cup S'$ is a $2$-linked set.
For a given $\lambda>0$, we equip the elements of $\mathcal{P}$ with the weight function 
\begin{equation}\label{def:weight}
w(S)=\frac{\lambda^{|S|}}{(1 + \lambda)^{|N(S)|}}.
\end{equation}
By symmetry, the polymer models $\mathcal{P}_{\mathcal{L}_d}$ and $\mathcal{P}_{\mathcal{L}_{d-1}}$ have the same properties. For convenience, we omit the subscript whenever it is not crucial, and in most cases one should think of $\mathcal{P}_{\mathcal{L}_d}$ as $\mathcal{P}$. 

The cluster expansion of the polymer model $(\mathcal{P}, w)$ is defined as in~(\ref{eq:cluexp}).
Denote by 
\[\lVert \Gamma\rVert:=\sum_{S\in \Gamma}|S|
\]
the size of a cluster $\Gamma$.
For $k\geq 1$, let
\begin{equation}
L_k:=\sum_{\Gamma\in \mathcal{C},\  \lVert \Gamma\rVert=k}w(\Gamma)
\end{equation}
be the \textit{$k$-th term of the cluster expansion}, and 
\begin{equation}
T_k:=\sum_{i=1}^{k-1}L_i
\end{equation}
be the \textit{$k$-th truncated cluster expansion}.

The following theorem, extending Theorem~\ref{thm:indset} to the independence polynomial $Z(\lambda)$ with a wide range of $\lambda$, is one of the main contributions of this paper.
\begin{thm}\label{thm:indpoly}
Suppose that $\lambda\geq C_0\ln d/d^{1/3}$, where $C_0$ is a sufficiently large constant and $\lambda$ is bounded as $d\rightarrow \infty$. Then for all fixed $k\geq 1$,
\[
Z(\lambda)=2(1+\lambda)^{N} \exp\left(\sum_{j=1}^{k}L_j + \eps_k\right),
\]
in which $L_k$ is the $k$-th term of the cluster expansion of the polymer model $(\mathcal{P}, w)$, and the error term $\eps_k$ is of size
\[
|\eps_k|=O\left(\frac{Nd^{11k+9}}{(1+\lambda)^{d(k+1)-3(k+1)^2/2}} \right)
\]as $d\rightarrow\infty$.
\end{thm}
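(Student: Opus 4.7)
The plan is to first rewrite the independence polynomial in the standard bipartite form. Since $\mathcal{B}(n,d)$ is bipartite, every $A\subseteq \mathcal{L}_d$ is independent, and summing over $B\subseteq \mathcal{L}_{d-1}\setminus N(A)$ gives
$$Z(\lambda)=(1+\lambda)^{N}\sum_{A\subseteq \mathcal{L}_d}\frac{\lambda^{|A|}}{(1+\lambda)^{|N(A)|}}=(1+\lambda)^{N}\sum_{A\subseteq \mathcal{L}_d}w(A),$$
and the analogous identity holds with $\mathcal{L}_{d-1}$. Every $A$ decomposes uniquely into its $2$-linked components, which are by definition pairwise non-adjacent as polymers. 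Restricting the sum to those $A$ whose components all lie in $\mathcal{P}_{\mathcal{L}_d}$ (i.e. have closure at most $N/2$) produces exactly $\Xi(\mathcal{P}_{\mathcal{L}_d},w)$.

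The first main step is therefore to prove
$$Z(\lambda)=(1+\lambda)^{N}\bigl[\Xi(\mathcal{P}_{\mathcal{L}_d},w)+\Xi(\mathcal{P}_{\mathcal{L}_{d-1}},w)\bigr]+E,$$
with $E$ of lower order. The error $E$ absorbs two things: double-counted $I$ in which both $I\cap \mathcal{L}_d$ and $I\cap \mathcal{L}_{d-1}$ consist entirely of small polymers (Lemma~\ref{lem:satu}(iii) forces one side to be nearly empty, so this contribution is only a constant factor times $(1+\lambda)^{N}$ tail), and configurations containing a $2$-linked component $S$ with $|[S]|>N/2$. The latter is bounded by applying Lemma~\ref{lem:container} in the complementary range $a>N/2$ (passing to the opposite side via closure) together with Lemma~\ref{lem:satu}(iii), which supplies the linear-in-$N$ expansion gap $b-a\geq a/(2d-1)$ needed to make the container bound exponentially small. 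By symmetry of $\mathcal{L}_d$ and $\mathcal{L}_{d-1}$, the two polymer partition functions coincide, so the main term is $2(1+\lambda)^{N}\Xi(\mathcal{P},w)$.

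The second main step is to apply the Koteck\'y--Preiss theorem (Theorem~\ref{KPconv}) to $(\mathcal{P},w)$. I would choose $f(S)=\alpha|S|$ and $g(S)=\beta|S|$ for small absolute constants $\alpha,\beta>0$ and verify
$$\sum_{S\sim S_0}|w(S)|\,e^{(\alpha+\beta)|S|}\leq \alpha|S_0|$$
by grouping polymers $S$ adjacent to $S_0$ according to a designated anchor vertex (there are $|S_0|\cdot d^{O(1)}$ choices within $2$-distance of $S_0$), then according to $t=|S|$ and $b=|N(S)|$. Corollary~\ref{cor:2link} bounds the number of $2$-linked shapes, Lemma~\ref{lem:satu}(i) gives $b\geq dt-t^{2}/2$ for $t\leq d/4$, and Lemma~\ref{lem:container} handles the regime $t\geq d/4$. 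For $\lambda\geq C_{0}\ln d/d^{1/3}$ the geometric series in $t$ converges, yielding the required inequality.

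Given Koteck\'y--Preiss, conclusion (\ref{eq:kot-thm2}) of Theorem~\ref{KPconv} tells us the tail of the cluster expansion. To extract $\eps_k$, I would sum $|w(\Gamma)|$ over clusters with $\lVert\Gamma\rVert>k$: anchoring at the smallest polymer in $\Gamma$ (an $N$-fold choice of its starting vertex), one reduces to controlling $\sum_{S:|S|\geq j}|w(S)|\exp(g(S))\leq f(S_0)$-type sums via KP and the trivial bound $w(S)\leq (1+\lambda)^{-d|S|+|S|^{2}/2}$ from Lemma~\ref{lem:satu}(i). The worst case is a cluster of size $k+1$, producing the stated factor $(1+\lambda)^{-d(k+1)+3(k+1)^{2}/2}$, and the number of cluster shapes of bounded size contributes only a polynomial $d^{11k+9}$ overhead. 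Combining this with the step~1 error, which is superexponentially smaller, gives the claimed formula for $Z(\lambda)$.

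The principal technical obstacle will be the KP verification and the tail estimate: one must ensure that the container bound in Lemma~\ref{lem:container}, which saves only $\exp(-C_1(b-a)\ln d/d^{2/3})$ rather than the sharper $1/\sqrt{d}$ savings available in the hypercube setting, is still strong enough to beat the growth of the number of $2$-linked shapes and the combinatorial explosion in the Ursell function. Threading that inequality carefully through the polymer anchoring and the choice of $\alpha,\beta$ is what forces the hypothesis $\lambda\geq C_{0}\ln d/d^{1/3}$ and dictates the exponent $11k+9$ of $d$ in the error.
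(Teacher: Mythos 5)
Your overall architecture matches the paper's: reduce $Z(\lambda)$ to $2(1+\lambda)^{N}\Xi(\mathcal{P},w)$ via the bipartite identity, verify Koteck\'y--Preiss for the polymer model using Lemma~\ref{lem:satu}, Corollary~\ref{cor:2link} and Lemma~\ref{lem:container}, and then truncate the cluster expansion. However, there are two genuine gaps. First, your accounting of the error $E$ in step~1 is wrong as stated: configurations containing a $2$-linked component $S\subseteq\mathcal{L}_d$ with $|[S]|>N/2$ are \emph{not} an error term and cannot be bounded by Lemma~\ref{lem:container} --- the single configuration $I=\mathcal{L}_d$ already contributes $\lambda^{N}$, which at $\lambda=1$ is half the main term. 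Such configurations are exactly the ones captured by $(1+\lambda)^{N}\Xi(\mathcal{P}_{\mathcal{L}_{d-1}},w)$ (via the term $\Lambda=\emptyset$ on the other side). What actually needs proving is (a) that no independent set has a large-closure component on \emph{both} sides (a short argument using $d$-regularity: $|N(S_1)|\geq|[S_1]|\geq N/2$ forces $N(S_1)$ to meet $[S_2]$, producing an edge inside $I$), so that the two restricted sums cover $\mathcal{I}(\mathcal{B}(n,d))$ exactly and $E$ is only the overlap; and (b) that the overlap is $O(\exp(-N/d^5))$ relative to the main term. Point (b) is not a consequence of Lemma~\ref{lem:satu}(iii) alone; in the paper it requires sampling from the measure $\hat\mu$, a concentration statement for $\lVert\mathbf{\Lambda}\rVert$ proved by running the cluster expansion on a \emph{tilted} weight $\tilde w(S)=w(S)e^{|S|/d^2}$ together with Markov's inequality, and a Chernoff bound on the non-defect side. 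You would need to supply some version of this.

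Second, your choice $g(S)=\beta|S|$ with $\beta$ a small absolute constant cannot produce the claimed error $\eps_k$. Conclusion~(\ref{eq:kot-thm2}) of Theorem~\ref{KPconv} bounds the tail $\sum_{\lVert\Gamma\rVert\geq k+1}|w(\Gamma)|$ only by $N\,f(v)\,e^{-\min_{\lVert\Gamma\rVert\geq k+1} g(\Gamma)}$, so with constant $\beta$ you get decay $e^{-\beta(k+1)}$, nowhere near the required $(1+\lambda)^{-d(k+1)+3(k+1)^2/2}$. Patching this afterwards by inserting the pointwise bound $w(S)\leq\lambda^{|S|}(1+\lambda)^{-d|S|+|S|^2/2}$ does not work, because controlling the sum over cluster structures (arbitrarily many repeated polymers, Ursell functions of all connected graphs) is precisely what the Koteck\'y--Preiss conclusion does for you; you cannot separate the combinatorics from the weights without redoing that machinery. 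The fix is to build the decay into $g$: take $g(S)\approx\bigl(d|S|-\tfrac32|S|^2\bigr)\ln(1+\lambda)-11|S|\ln d$ for small $|S|$ (with weaker versions for larger $|S|$ so that $g(\Gamma)\geq\gamma(d,\lVert\Gamma\rVert)$ by superadditivity), and verify that the hypothesis $\sum_{S\sim S_0}|w(S)|e^{f(S)+g(S)}\leq f(S_0)$ still holds --- it does, because $w(S)e^{g(S)}\lesssim d^{-11|S|}$ by Lemma~\ref{lem:satu}(i) and Corollary~\ref{cor:2link}, which is also where the hypothesis $\lambda\geq C_0\ln d/d^{1/3}$ and the exponent $11k+9$ actually come from. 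With that choice of $g$, the tail bound and hence $\eps_k$ follow exactly as you describe.
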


To derive a sharp asymptotic on the number of independent sets from Theorem~\ref{thm:indpoly}, we need to compute $L_1$ and $L_2$ explicitly.
\hfill \break
\hfill \break
\noindent\textbf{Polymers.} Every polymer of size 1 is a single vertex of $\mathcal{L}_d$. There are $\binom{n}{d}$ of them, and each has weight $\frac{\lambda}{(1+\lambda)^d}$.
Every polymer of size 2 is a set of two vertices of $\mathcal{L}_d$ sharing a common neighbor.
There are $\binom{n}{d}\binom{d}{2}$ of them and each has weight $\frac{\lambda^2}{(1 + \lambda)^{2d-1}}$. 
\newline

\noindent\textbf{Clusters.}  There is only one type of cluster of size 1, which consists of a polymer of size 1, with Ursell function 1. Then we have
\[
L_1=\binom{n}{d}\frac{\lambda}{(1+\lambda)^d}.
\]
There are two types of clusters of size 2. The first type is an ordered pair of adjacent polymers of size 1, whose Ursell function is $-1/2$ and whose weight is $-\frac{\lambda^2}{2(1+\lambda)^{2d}}$. The number of such clusters is $\binom{n}{d}+\binom{n}{d}d(d-1)$, where the first term counts for the pairs with repeated polymers, and the second term counts for the ordered pairs with distinct polymers.
The second type is a single polymer of size 2, of which there are $\binom{n}{d}\binom{d}{2}$, with
Ursell function $1$ and weight $\frac{\lambda^2}{(1 + \lambda)^{2d-1}}$. 
Then we have
\[
L_2=-\frac12\binom{n}{d}(d^2-d+1)\frac{\lambda^2}{(1+\lambda)^{2d}} + \binom{n}{d}\binom{d}{2}\frac{\lambda^2}{(1 + \lambda)^{2d-1}}.
\]

\noindent\textit{Proof of Theorem~\ref{thm:indset}.}
When $\lambda=1$, we have $L_1=N2^{-d}$ and $L_2=N(d^2-d-1)2^{-(2d+1)}$.
Applying Theorem~\ref{thm:indpoly}, we obtain that
\begin{equation*}
\begin{split}
Z(1)&=2\cdot 2^{N}\exp\left(N2^{-d} + N2^{-2d}(d^2-d-1)/2  + \eps_2\right)
=2(1+o(1))2^{N}\exp\left(N2^{-d} + \binom{d}{2}N2^{-2d}\right),
\end{split}
\end{equation*}
where the last equality follows from  $\eps_2=O\left(Nd^{31}2^{-3d} \right)=o(2^{-d/2})$, and $N2^{-2d}=\Theta(d^{-1/2})$.
\qed

\section{Proof of Theorem~\ref{thm:indpoly}}
Throughout this section, we fix $\lambda\geq C_0(\ln d)/d^{1/3}$, where $C_0$ is a sufficiently large constant and $\lambda$ is bounded as $d\rightarrow \infty$.
\subsection{Convergence of the polymer model}
For integers $d, k\geq 1$, let
\begin{equation}\label{def:gamma}
\gamma(d, k)= 
  \begin{cases}
(dk - 3k^2/2)\ln(1 + \lambda) - 11k\ln d & \text{if $k\leq d/4$,} \\
(dk/12)\ln(1 + \lambda) & \text{if $d/4< k\leq d^4$,}\\
k/d^{2}  & \text{if $d^4< k$}.
  \end{cases}
\end{equation}
For a constant $C\geq 1$, we introduce a more general weight function $\tilde{w}$ on $\mathcal{P}$ (recall $\mathcal{P}$ from~(\ref{def:polymer})), 
as 
\begin{equation}\label{def:gwei}
\tilde{w}(S)=w(S)\exp((C-1)|S|/d^2),
\end{equation}
where $w(S)$ is defined in (\ref{def:weight}) and for brevity we omit the dependency of $\tilde{w}(S)$ on $C$.
Moreover, let $f,\ g: \mathcal{P}\rightarrow [0, \infty)$ be two functions defined as
\begin{equation}\label{def:fun}
f(S)=|S|/d^2 \quad \text{and}
\quad g(S)=\gamma(d, |S|).
\end{equation} 

The following lemma implies that the polymer model $(\mathcal{P}, w)$ defined in Section~\ref{sec:res} has a convergent cluster expansion. 

\begin{lemma}\label{lem:kot-cond}
Let $\tilde{w}$, $f$, and $g$ be as in~(\ref{def:gwei}) and (\ref{def:fun}). Then for all polymers $S_0\in\mathcal{P}$, 
\begin{equation*}
\sum_{S\sim S_0}|\tilde{w}(S)|\exp\left(f(S)+g(S)\right)\leq f(S_0).
\end{equation*}

\end{lemma}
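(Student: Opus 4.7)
The plan is to bound the left-hand side by partitioning the sum over $S\sim S_0$ according to $k=|S|$, and then handling the three regimes appearing in the definition of $\gamma(d,k)$ separately. Writing the LHS as $\sum_{k\ge 1}\Sigma_k$ with
\[
\Sigma_k:=\sum_{\substack{S\sim S_0\\|S|=k}}\frac{\lambda^k}{(1+\lambda)^{|N(S)|}}\exp\bigl(Ck/d^2+\gamma(d,k)\bigr),
\]
the elementary observation to exploit is that if $S\sim S_0$, then $S$ must contain at least one vertex of $\mathcal{D}$ lying within graph distance $2$ of $S_0$, and the number of such vertices is at most $|S_0|(1+d(d-1))\le|S_0|d^2$. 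Combined with Corollary~\ref{cor:2link}, this gives at most $|S_0|d^{6k+2}$ polymers $S\in\mathcal{P}$ of size $k$ with $S\sim S_0$, which I use as the counting bound in the two smaller regimes.

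In the small regime $1\le k\le d/4$, Lemma~\ref{lem:satu}(i) gives $|N(S)|\ge dk-k^2/2$. The exponent $\gamma(d,k)=(dk-3k^2/2)\ln(1+\lambda)-11k\ln d$ is designed so that the powers of $(1+\lambda)$ nearly cancel, leaving $\lambda^k(1+\lambda)^{-k^2}$ against a combined factor $|S_0|d^{6k+2-11k}=|S_0|d^{-5k+2}$. Since $\lambda$ is bounded, the $k=1$ term contributes $O(|S_0|/d^3)$ and subsequent terms decay by a further factor of $d^{-5}$ each, so this whole regime contributes $O(|S_0|/d^3)$. In the medium regime $d/4<k\le d^4$, Lemma~\ref{lem:satu}(ii) gives $|N(S)|\ge dk/6$, so combined with $\gamma(d,k)=(dk/12)\ln(1+\lambda)$ the per-term bound becomes $|S_0|\lambda^k d^{6k+2}(1+\lambda)^{-dk/12}$. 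The hypothesis $\lambda\ge C_0(\ln d)/d^{1/3}$ with $C_0$ sufficiently large forces $(1+\lambda)^{dk/12}\ge\exp(\Omega(C_0 k d^{2/3}\ln d))$, which dominates $d^{6k+2}$ and yields super-geometric decay summing to a negligible contribution.

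For the large regime $k>d^4$, the closure $[S]$ can be much larger than $|S|$, so Corollary~\ref{cor:2link} is no longer efficient and I would instead invoke Lemma~\ref{lem:container}. Because every $S\in\mathcal{P}$ satisfies $|[S]|\le N/2$, Lemma~\ref{lem:satu}(iii) yields $|N(S)|-|[S]|\ge|[S]|/(2d-1)\ge k/(2d-1)$. Summing the container bound over the at most $N^2$ valid pairs $(a,b)=(|[S]|,|N(S)|)$ and using the geometric tail of $\exp(-C_1(b-a)\ln d/d^{2/3})$, I obtain $\Sigma_k\le N^2\cdot\operatorname{poly}(d)\cdot\exp\bigl(-C_1 k\ln d/(3d^{5/3})+(C+1)k/d^2\bigr)$. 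Since $\ln d\cdot d^{1/3}\to\infty$, the negative term dominates $(C+1)k/d^2$, and $k\ge d^4$ makes each summand super-polynomially small in $d$, so the total over $k>d^4$ is far below $1/d^2\le|S_0|/d^2$.

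The main obstacle is the medium regime, where the counting factor $d^{6k}$ from Corollary~\ref{cor:2link} must be defeated by the $(1+\lambda)$-decay built into $\gamma(d,k)$; this is precisely where the hypothesis $\lambda\ge C_0(\ln d)/d^{1/3}$ with a sufficiently large constant $C_0$ is essential, and also why $\gamma$ is defined piecewise rather than by a single formula. The large regime, once the container lemma is invoked, loses the explicit factor of $|S_0|$, but the decay is so extreme that the trivial bound $|S_0|\ge 1$ more than compensates.
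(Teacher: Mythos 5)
Your proposal is correct and follows essentially the same route as the paper's proof: the same reduction via the at most $|S_0|(1+d(d-1))\le |S_0|d^2$ vertices within distance two of $S_0$, the same three-regime split matching the piecewise definition of $\gamma(d,k)$, with Lemma~\ref{lem:satu} and Corollary~\ref{cor:2link} handling the first two regimes and Lemma~\ref{lem:container} the third. The only (immaterial) difference is bookkeeping: the paper factors out $|S_0|d^2$ up front and proves a per-vertex bound of $1/d^4$, whereas you carry that factor through the sums.
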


\begin{proof}

For a vertex $u$ in $\mathcal{L}_d$, denote by $N^2(u)$ the second neighborhood of $u$, i.e.~the set of all vertices at distance two from $u$. 
By the definition of functions $\tilde{w}$, $f$ and $g$,
we have that for every polymer $S_0$, 
\[
\begin{split}
\sum_{S\sim S_0}\tilde{w}(S)\exp\left(f(S)+g(S)\right)
&=\sum_{S\sim S_0}w(S)\exp\left(C|S|/d^2+g(S)\right)\\
&\leq \sum_{u\in S_0}\sum_{v\in N^2(u)}\sum_{S\ni v }w(S)\exp\left(C|S|/d^2+g(S)\right).
\end{split}
\]
Together with the fact that $\mathcal{B}(n,d)$ is $d$-regular,
it is sufficient to prove that for every $v\in \mathcal{L}_d$, 
\begin{equation}\label{bound for vertex}
\sum_{S\ni v}w(S)\exp\left(C|S|/d^2+g(S)\right)\leq 1/d^4,
\end{equation}
as it would imply that
\[
\sum_{S\sim S_0}\tilde{w}(S)\exp\left(f(S)+g(S)\right)\leq |S_0|d^2(1/d^4)\leq |S_0|/d^2=f(S_0).
\]

Fix an arbitrary vertex $v\in \mathcal{L}_d$. To prove (\ref{bound for vertex}), we will split the sum into three parts.
We also omit writing the assumptions $S\ni v$ everywhere, as all polymers we consider here contain $v$.\\

\noindent\textbf{Case 1:}  $|S|\leq d/4$. 
By Lemma~\ref{lem:satu}~(i), we have $|N(S)|\geq d|S|-|S|^2/2> 0$. 
Moreover, Corollary~\ref{cor:2link} indicates that the number of $S\in\mathcal{P}$ with $|S|=k$ and $v\in S$ is at most $\exp\left(6k\ln d\right)$.
Together with definitions of $w(S)$ and $g(S)$, we then obtain
\[
\begin{split}
        \sum_{|S|\leq d/4} w(S)\exp\left(\frac{C|S|}{d^2}+g(S)\right)
        &\leq\sum_{k=1}^{d/4}
        \sum_{|S|=k} \frac{\lambda^{k} }{(1+\lambda)^{dk-k^2/2}}\exp\left(\frac{Ck}{d^2}+\left(dk-\frac{3k^2}{2}\right)\ln{(1+\lambda})-11k\ln d\right)\\
        &\leq\sum_{k=1}^{d/4}
       \frac{\lambda^{k} }{(1+\lambda)^{dk-k^2/2}} \exp\left(\frac{Ck}{d^2}+\left(dk-\frac{3k^2}{2}\right)\ln(1+\lambda)-5k\ln d\right)\\
       &=\sum_{k=1}^{d/4}
      \exp\left(k\ln \lambda-k^2\ln{(1+\lambda)}+Ck/d^2-5k\ln d\right)\\
      &\leq \sum_{k=1}^{d/4}
      \exp\left(Ck/d^2-5k\ln d\right)
      \leq d^{-5}\sum_{k=1}^{d/4}\exp\left(Ck/d^2\right)\leq\frac{1}{3d^4}.
\end{split}
\]

\noindent\textbf{Case 2:} $d/4<|S|\leq d^4$. 
By Lemma \ref{lem:satu}~(ii), we have $N(S)\geq d|S|/6$. Similarly as in Case~1, we obtain
\[
    \begin{split}
        \sum_{d/4\leq |S|\leq d^4} w(S)\exp\left(\frac{C|S|}{d^2}+g(S)\right)
        &=\sum_{k=d/4}^{d^4}\sum_{|S|=k}\frac{\lambda^{k} }{(1+\lambda)^{dk/6}} \exp\left(\frac{Ck}{d^2}+\frac{dk}{12}\ln{ (1+\lambda)}\right)\\
        &\leq\sum_{k=d/4}^{d^4} \exp\left(k\ln \lambda+ \frac{Ck}{d^2}-\frac{dk}{12}\ln{(1+\lambda)} + 6k\ln d\right).
    \end{split}
\]
Note that for $\lambda\geq C_0(\ln d)/d^{1/3}$ and $d$ sufficiently large, we have that $\ln\lambda+C/d^2\ll (d/12)\ln(1+\lambda)$, and $d\ln{(1+\lambda)}\geq O(d^{2/3}\ln d)$. Then we further have
\[
    \begin{split}
        \sum_{d/4\leq |S|\leq d^4} w(S)\exp\left(\frac{C|S|}{d^2}+g(S)\right)
        &\leq \sum_{k=d/4}^{d^4} \exp\left(-\frac{dk}{24}\ln{(1+\lambda)}+ 6k\ln d\right)\\
        &\leq d^4 \exp(-O(d^{2/3}\ln d) )\leq \frac{1}{3d^4}.
    \end{split}
\]

\noindent\textbf{Case 3:} $|S|\geq d^4$. Recall that for $S\in \mathcal{P}$ we have $|S|\leq \frac{1}{2}\binom{n}{d}$. 
By Lemma \ref{lem:satu}~(iii), we have $N(S)\geq(1+\frac{1}{2d})|S|$. 
Then we have
\[
    \begin{split}
      \sum_{d^4\leq |S|\leq \frac12\binom{n}{d}} w(S)\exp\left(\frac{C|S|}{d^2}+g(S)\right)
      &= \sum_{d^4\leq |S|\leq \frac12\binom{n}{d}} \frac{\lambda^{|S|}}{(1+\lambda)^{|N(S)|}}\exp\left(\frac{C|S|+|S|}{d^{2}}\right)\\
      &\leq\sum_{d^4\leq a\leq \frac{1}{2}\binom{n}{d}}\sum_{\left(a + \frac{a}{2d}\right)\leq b\leq\binom{n}{d}}\sum_{S\in \mathcal{G}(a, b)}\frac{\lambda^{|S|}}{(1+\lambda)^{|N(S)|}}\exp\left(\frac{aC+a}{d^{2}}\right)\\
      &\leq \sum_{d^4\leq a\leq \frac{1}{2}\binom{n}{d}}\sum_{\left(a + \frac{a}{2d}\right)\leq b\leq \binom{n}{d}}\binom{n}{d}\exp\left(\frac{aC+a}{d^{2}}-\frac{C_1 (b-a)\ln d}{d^{2/3}}\right),
    \end{split}
\]
where the second inequality follows from Lemma~\ref{lem:container}. 
Since all pairs $(a, b)$ satisfy $b-a\geq a/2d$, we further obtain
\[
\begin{split}
\sum_{d^4\leq |S|\leq \frac12\binom{n}{d}} w(S)\exp\left(\frac{C|S|}{d^2}+g(S)\right)
&\leq \sum_{d^4\leq a\leq \frac{1}{2}\binom{n}{d}}\sum_{\left(1 + \frac{1}{2d}\right)a\leq b\leq \binom{n}{d}}\binom{n}{d}\exp\left(\frac{(C+1)a}{d^{2}}-\frac{C_1 a\ln d}{2d^{5/3}}\right)\\
&
\leq \binom{n}{d}^2\sum_{a\geq d^4}\exp\left(- O\left(\frac{a\ln d}{d^{5/3}}\right)\right)
\leq\binom{n}{d}^{3}\exp\left(-d^{7/3}\right)\leq \frac{1}{3d^4}.
\end{split}
\]

The sum of the upper bounds in these three cases gives (\ref{bound for vertex}).
\end{proof}

\begin{lemma}\label{lem:conv}
Let $\mathcal{P}$ and $\tilde{w}$  be as in Lemma~\ref{lem:kot-cond}. Then for $k\leq \frac{d}{48}$, we have
\[
\sum_{\Gamma\in \mathcal{C},\ \rVert\Gamma\lVert\geq k}|\tilde{w}(\Gamma)|\leq \binom{n}{d}d^{-2}\exp\left(-\gamma(d, k)\right).
\]
\end{lemma}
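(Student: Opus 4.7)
\emph{Proof plan.} The strategy is to combine Lemma~\ref{lem:kot-cond} with Theorem~\ref{KPconv} and then extract a factor of $\exp(-\gamma(d,k))$ from the weight $\exp(g(\Gamma))$. Since Lemma~\ref{lem:kot-cond} verifies hypothesis~(\ref{eq:kot-thm1}) for the weights $\tilde w$ and the auxiliary functions $f,g$ defined in~(\ref{def:gwei})--(\ref{def:fun}), Theorem~\ref{KPconv} yields $\sum_{\Gamma\sim S_0}|\tilde w(\Gamma)|\exp(g(\Gamma))\leq f(S_0)=|S_0|/d^{2}$ for every polymer $S_0\in\mathcal{P}$. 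I would specialise this to singleton polymers: for each $v\in\mathcal{L}_d$ the set $\{v\}$ is non-empty, $2$-linked, and satisfies $[\{v\}]=\{v\}$ (two distinct vertices of $\mathcal{L}_d$ cannot share a $d$-element neighbourhood), so $\{v\}\in\mathcal{P}$ with $f(\{v\})=1/d^{2}$.

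The geometric point is that every cluster $\Gamma$ is adjacent in the polymer graph to $\{v\}$ for any vertex $v$ lying in some polymer of $\Gamma$: if $v\in S\in\Gamma$ then $S\cup\{v\}=S$ is $2$-linked, hence $\{v\}\sim S$ and thus $\{v\}\sim\Gamma$. Since every cluster contains at least one vertex, summing over $v\in\mathcal{L}_d$ counts each cluster at least once, giving
\[
\sum_{\substack{\Gamma\in\mathcal{C}\\ \|\Gamma\|\geq k}}|\tilde w(\Gamma)|\exp(g(\Gamma))\;\leq\;\sum_{v\in\mathcal{L}_d}\sum_{\Gamma\sim\{v\}}|\tilde w(\Gamma)|\exp(g(\Gamma))\;\leq\;\binom{n}{d}/d^{2}.
\]
It then suffices to prove $g(\Gamma)\geq\gamma(d,k)$ whenever $\|\Gamma\|\geq k$ and $k\leq d/48$, after which factoring $\exp(\gamma(d,k))$ out of the sum gives the claim.

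The bulk of the work is two monotonicity checks on the piecewise function $\gamma(d,\cdot)$ of~(\ref{def:gamma}). First, I would verify that $k\mapsto\gamma(d,k)/k$ is non-increasing on $\mathbb{Z}_{\geq 1}$: on regime~1 the ratio $(d-3k/2)\ln(1+\lambda)-11\ln d$ is linearly decreasing in $k$, on regimes~2, 3 it is constant, and the transitions at $k=d/4$ and $k=d^{4}$ are drops (using $\lambda\geq C_0(\ln d)/d^{1/3}$ for the first). Standard superadditivity then gives $g(\Gamma)=\sum_{S\in\Gamma}\gamma(d,|S|)\geq\gamma(d,\|\Gamma\|)$. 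Second, I would check that $\gamma(d,j)\geq\gamma(d,k)$ for every $j\geq k$ when $k\leq d/48$: on regime~1 the discrete derivative $(d-3j)\ln(1+\lambda)-11\ln d$ is positive throughout $j\leq d/4$, and on regimes~2, 3 the function is linearly increasing in $j$. At the boundary $j=d/4$ the post-drop value $\gamma(d,d/4+1)\approx(d^{2}/48)\ln(1+\lambda)$ still dominates $\gamma(d,k)\leq dk\ln(1+\lambda)\leq(d^{2}/48)\ln(1+\lambda)$, and at $j=d^{4}$ the post-drop value $\gamma(d,d^{4}+1)\approx d^{2}$ trivially exceeds $\gamma(d,k)$ since $\lambda$ is bounded.

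The main obstacle is this second monotonicity step: the piecewise function $\gamma(d,\cdot)$ is genuinely not monotone globally, and it is precisely the hypothesis $k\leq d/48$ (rather than $k\leq d/4$) that absorbs the drop at the regime~$1$/$2$ interface and forces $\gamma(d,k)\leq(d^{2}/48)\ln(1+\lambda)$. Every other step is a direct consequence of Koteck\'y--Preiss.
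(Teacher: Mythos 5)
Your proposal is correct and follows essentially the same route as the paper: verify the Koteck\'y--Preiss hypothesis via Lemma~\ref{lem:kot-cond}, apply~(\ref{eq:kot-thm2}) to singleton polymers and sum over $v\in\mathcal{L}_d$ to get the bound $\binom{n}{d}d^{-2}$ on $\sum_\Gamma|\tilde w(\Gamma)|\exp(g(\Gamma))$, then use the non-increasing ratio $\gamma(d,k)/k$ for superadditivity and the near-monotonicity of $\gamma(d,\cdot)$ under $k\le d/48$ to extract $\exp(-\gamma(d,k))$. Your treatment of the regime transitions is in fact slightly more explicit than the paper's.
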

\begin{proof}
Recall the definitions of $f,\ g: \mathcal{P}\rightarrow [0, \infty)$ as $f(S)=|S|/d^2$ and
$g(S)=\gamma(d, |S|)$.
It follows from Lemma~\ref{lem:kot-cond} that such $f,g,\tilde{w}$ satisfy the assumption (\ref{eq:kot-thm1}) of  Theorem \ref{KPconv}. 
Then for every vertex $v\in \mathcal{L}_d$, Theorem~\ref{KPconv} indicates that
\[
\sum_{\substack{\Gamma\in \mathcal{C},\  \Gamma\sim v}}|\tilde{w}(\Gamma)|\exp(g(\Gamma))\leq d^{-2}.
\]
Summing over all $v\in \mathcal{L}_d$, we obtain
\[
\sum_{\substack{\Gamma\in \mathcal{C}}}|\tilde{w}(\Gamma)|\exp(g(\Gamma))\leq \binom{n}{d}d^{-2}.
\]
Recall that $g(\Gamma)=\sum_{S\in \Gamma}g(S)$.
Since $\gamma (d,k)/k$ is a non-increasing function of $k$, 
we obtain that
\[
g(\Gamma)=\sum_{S\in \Gamma}g(S)=\sum_{S\in \Gamma}\frac{\gamma(d,|S|)}{|S|}|S|\geq \frac{\gamma(d,\rVert\Gamma\lVert)}{\lVert\Gamma\rVert}\sum_{S\in \Gamma}|S|=\gamma(d,\lVert\Gamma\rVert).
\]
For a fixed $k\leq d/48$,
observe that $\gamma(d, s)$ is an increasing function of $s$ in the range $[0,\ d/4]$, and for every $s>d/4$ we have $\gamma(d,s)\geq \gamma(d,k)$. Then it follows that 
\[
\sum_{\substack{\Gamma\in \mathcal{C}\\ \lVert\Gamma\rVert\geq k}}|\tilde{w}(\Gamma)|\exp(\gamma(d,k)) \leq \sum_{\substack{\Gamma\in \mathcal{C}\\ \lVert\Gamma\rVert\geq k}}|\tilde{w}(\Gamma)|\exp(\gamma(d,\lVert\Gamma\rVert))\leq \sum_{\substack{\Gamma\in \mathcal{C}\\ \lVert\Gamma\rVert\geq k}}|\tilde{w}(\Gamma)|\exp(g(\Gamma))\leq \binom{n}{d}d^{-2},
\]
which completes the proof.
\end{proof}

In particular, for $\tilde{w}=w$ (by taking $C=1$), together with the definitions of $\gamma(d, k)$ and $T_k$, we obtain the following corollary.
\begin{cor}\label{cor:conv}
For a fixed integer $k$, as $d\rightarrow\infty$, the polymer model $(\mathcal{P},w)$ defined in Section~\ref{sec:res} satisfies
\[
|T_k - \ln\Xi(\mathcal{P},w)|\leq \binom{n}{d} d^{11k-2}(1+\lambda)^{-dk+3k^2/2}.
\]
\end{cor}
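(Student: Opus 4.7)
The plan is to derive the corollary as a direct specialization of Lemma~\ref{lem:conv} combined with the cluster expansion identity. First I would invoke~(\ref{eq:cluexp}), which states $\ln\Xi(\mathcal{P},w)=\sum_{\Gamma\in\mathcal{C}}w(\Gamma)$, together with the definitions
\[
T_k=\sum_{i=1}^{k-1}L_i=\sum_{\Gamma\in\mathcal{C},\ \lVert\Gamma\rVert<k}w(\Gamma).
\]
Subtracting shows that $T_k-\ln\Xi(\mathcal{P},w)$ is exactly the tail $-\sum_{\Gamma\in\mathcal{C},\ \lVert\Gamma\rVert\geq k}w(\Gamma)$ of the cluster expansion, so after applying the triangle inequality it suffices to bound $\sum_{\Gamma\in\mathcal{C},\ \lVert\Gamma\rVert\geq k}|w(\Gamma)|$.

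Next, I would specialize Lemma~\ref{lem:conv} to the unweighted case $\tilde{w}=w$ by taking $C=1$ in the definition~(\ref{def:gwei}). Since $k$ is fixed while $d\to\infty$, eventually $k\leq d/48$, so Lemma~\ref{lem:conv} is available and yields
\[
\sum_{\Gamma\in\mathcal{C},\ \lVert\Gamma\rVert\geq k}|w(\Gamma)|\leq \binom{n}{d}d^{-2}\exp(-\gamma(d,k)).
\]
For such fixed $k$ and large $d$, the top branch of the piecewise definition~(\ref{def:gamma}) is the one that applies, giving
\[
\gamma(d,k)=\left(dk-\tfrac{3k^2}{2}\right)\ln(1+\lambda)-11k\ln d,
\]
and therefore $\exp(-\gamma(d,k))=d^{11k}(1+\lambda)^{-dk+3k^2/2}$. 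Plugging this in produces exactly the claimed bound $\binom{n}{d}d^{11k-2}(1+\lambda)^{-dk+3k^2/2}$.

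In this sense the corollary is essentially a translation of Lemma~\ref{lem:conv}, and no genuine obstacle arises at this step; the substantive work (the Koteck\'y--Preiss-style estimate via Lemma~\ref{lem:kot-cond}, which in turn invokes the Sapozhenko-type container Lemma~\ref{lem:container}) has already been carried out. The only items worth verifying explicitly are the domain conditions: for fixed $k$ and sufficiently large $d$ we simultaneously satisfy $k\leq d/4$ (so that the first branch of $\gamma(d,k)$ is the relevant one) and $k\leq d/48$ (so that Lemma~\ref{lem:conv} applies), both of which hold automatically as $d\to\infty$.
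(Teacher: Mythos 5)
Your proposal is correct and is essentially identical to the paper's own (very terse) derivation: the paper likewise obtains the corollary by taking $C=1$ in Lemma~\ref{lem:conv} so that $\tilde{w}=w$, identifying $T_k-\ln\Xi(\mathcal{P},w)$ with the tail $-\sum_{\lVert\Gamma\rVert\geq k}w(\Gamma)$ of the cluster expansion, and evaluating $\exp(-\gamma(d,k))$ via the first branch of~(\ref{def:gamma}), which applies since $k$ is fixed and $d\to\infty$. Your explicit check of the domain conditions $k\leq d/4$ and $k\leq d/48$ is exactly the bookkeeping the paper leaves implicit.
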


\subsection{Independent sets in $\mathcal{B}(n, d)$}
Recall that $\mathcal{P}$ is a polymer model defined as in (\ref{def:polymer}) and $\Omega_{\mathcal{P}}$ is the collection of independent sets, ignoring loops, of $\mathcal{P}$. For an independent set $\Lambda\in \Omega_{\mathcal{P}}$, we set
\begin{equation}\label{def:sizegamma}
\lVert \Lambda \rVert:= \sum_{S\in \Lambda}|S|
\quad
\text{and}
\quad
N(\Lambda):=\cup_{S\in \Lambda}N(S).
\end{equation}
 Define a probability measure $\nu$ on $\Omega_{\mathcal{P}}$ as follows
\begin{equation}\label{def:distrinv}
    \nu(\Lambda):=\frac{\prod_{S\in \Lambda}w(S)}{\Xi(\mathcal{P},w)}
    =\frac{1}{\Xi(\mathcal{P},w)}\cdot \frac{\lambda^{\lVert \Lambda \rVert}}{(1 + \lambda)^{|N(\Lambda)|}}.
\end{equation}

\begin{lemma}\label{Independent set is small}
Let $\mathbf{\Lambda}$ be a random independent set drawn with distribution $\nu$. Then with probability at least $1- \exp(-\frac{1}{d^5}\binom{n}{d})$, we have
\[
\lVert\mathbf{\Lambda}\rVert\leq \frac{1}{d^2}\binom{n}{d}.
\]
\end{lemma}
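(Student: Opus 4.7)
The plan is to prove the bound via exponential Markov's inequality applied to the non-negative random variable $\|\mathbf{\Lambda}\|$, leveraging the generalized weight $\tilde{w}$ from (\ref{def:gwei}). For a parameter $s>0$ to be chosen, observe that by the definition of $\nu$ in (\ref{def:distrinv}),
\[
\mathbb{E}_\nu\bigl[e^{s\|\mathbf{\Lambda}\|}\bigr]=\frac{\Xi(\mathcal{P},w_s)}{\Xi(\mathcal{P},w)},\qquad \text{where } w_s(S):=w(S)\,e^{s|S|}.
\]
Setting $C=1+sd^{2}$ makes $w_s$ coincide with the generalized weight $\tilde{w}$ from (\ref{def:gwei}), so as long as $C$ is a bounded constant, Lemmas~\ref{lem:kot-cond} and~\ref{lem:conv} apply to $w_s$ and (with $C=1$) to $w$. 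I would take $s=2/d^{3}$, so that $C=1+2/d\le 2$ for all large $d$, and in particular both cluster expansions converge absolutely.

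The next step is to estimate $\ln \mathbb{E}_\nu\bigl[e^{s\|\mathbf{\Lambda}\|}\bigr]=\ln\Xi(\mathcal{P},w_s)-\ln\Xi(\mathcal{P},w)$. Using the cluster expansion identity (\ref{eq:cluexp}) and the triangle inequality,
\[
\bigl|\ln \mathbb{E}_\nu[e^{s\|\mathbf{\Lambda}\|}]\bigr|\le \sum_{\Gamma\in\mathcal{C}}|w_s(\Gamma)|+\sum_{\Gamma\in\mathcal{C}}|w(\Gamma)|\le 2\binom{n}{d}d^{-2}\exp\bigl(-\gamma(d,1)\bigr),
\]
where the second inequality is Lemma~\ref{lem:conv} applied with $k=1$ to $w_s$ and to $w$. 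Since $\gamma(d,1)=(d-3/2)\ln(1+\lambda)-11\ln d$ and $\lambda\ge C_0(\ln d)/d^{1/3}$, one has $(1+\lambda)^{d}=\exp\bigl(\Omega(d^{2/3}\ln d)\bigr)$, so $\exp(-\gamma(d,1))$ decays faster than any fixed negative power of $d$, and the displayed quantity is $o(N/d^{5})$.

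Combining these two ingredients,
\[
\Pr\bigl[\|\mathbf{\Lambda}\|\ge N/d^{2}\bigr]\le \exp\bigl(-sN/d^{2}+|\ln \mathbb{E}_\nu[e^{s\|\mathbf{\Lambda}\|}]|\bigr)\le \exp\bigl(-2N/d^{5}+o(N/d^{5})\bigr),
\]
which is at most $\exp(-N/d^{5})$ for all sufficiently large $d$. The only substantive check is that the choice $s=2/d^{3}$ keeps $C=1+sd^{2}$ bounded so that Lemma~\ref{lem:kot-cond} remains applicable, and this is immediate. The conceptual reason the argument works is that $\gamma(d,1)$ is enormous under the hypothesis on $\lambda$, so the tilted measure differs from $\nu$ by a negligible factor; hence even an almost trivial exponential tilt is already enough to push the probability of $\{\|\mathbf{\Lambda}\|\ge N/d^{2}\}$ below the target $\exp(-N/d^{5})$.
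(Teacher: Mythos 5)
Your proof is correct and follows essentially the same route as the paper: an exponential tilt of $\nu$ realized through the auxiliary weight $\tilde{w}$ of (\ref{def:gwei}), a bound on the log-moment-generating function via the convergent cluster expansion (Lemma~\ref{lem:conv} with $k=1$), and Markov's inequality. The only differences are cosmetic: the paper takes $C=2$ (i.e.\ $s=1/d^{2}$) and uses $\Xi(\mathcal{P},w)\ge 1$ to discard the denominator, while you take $s=2/d^{3}$ (so $C=1+2/d$, which indeed still satisfies the hypotheses of Lemma~\ref{lem:kot-cond}) and bound both cluster expansions; both choices yield the stated probability bound.
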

\begin{proof}
Taking $C=2$ in the function $\tilde{w}$ 
defined in (\ref{def:gwei}), we get $\tilde{w}(S)=w(S)e^{|S|d^{-2}}$, where $w(S)$ is defined in (\ref{def:weight}). 
For the \textit{auxiliary polymer model}
$\Xi(\mathcal{P}, \tilde{w})$, 
we obtain that
\begin{align}\label{boundforlnXi}
\ln \Xi(\mathcal{P},\tilde{w})=\sum_{\Gamma\in \mathcal{C}}\tilde{w}(\Gamma)\leq
 \sum_{\Gamma\in \mathcal{C}}|\tilde{w}(\Gamma)|&\leq \binom{n}{d}d^{-2}\exp(-\gamma(d,1))= \binom{n}{d}d^{9}(1+\lambda)^{3/2-d},
\end{align}
where the last inequality follows from Lemma~\ref{lem:conv} with $C=2$ and $k=1$.
Using the definition of $\Xi(\mathcal{P}, w)$ from (\ref{def:partitionfunction}) and the definition of $\lVert \Lambda \rVert$ from (\ref{def:sizegamma}) we get
\[
\begin{split}
        \ln \Xi(\mathbf{\mathcal{P}},\tilde{w})-\ln \Xi (\mathcal{P},w)&=\ln\frac{\Xi (\mathcal{P},\tilde{w})}{\Xi (\mathcal{P},w)}=\ln\left( \sum_{\Lambda\in\Omega_{\mathcal{P}}}\frac{\prod_{S\in \Lambda}w(S)\exp(|S|/d^2)}{\Xi(\mathcal{P},w)}\right)\\
        &=\ln\left( \sum_{\Lambda\in\Omega_{\mathcal{P}}}\exp\left(\frac{\lVert \Lambda \rVert}{d^2}\right)\frac{\prod_{S\in \Lambda}w(S)}{\Xi(\mathcal{P},w)}\right)=\ln \mathbb{E}\left(\exp(\lVert\mathbf{\Lambda}\rVert/d^2)\right). 
\end{split}
\]

For every $\lambda>0$, since we always include the empty set in $\Omega_{\mathcal{P}}$, we have $\Xi(\mathcal{P},w)\geq 1$. 
From this fact and inequality (\ref{boundforlnXi}), we have
\[
\ln \mathbb{E}\left(\exp(\lVert\mathbf{\Lambda}\rVert/d^2)\right) \leq \ln \Xi(\mathcal{P},\tilde{w}) \leq \binom{n}{d}d^{9}(1+\lambda)^{3/2-d}.
\]
Using Markov's inequality we get
\[
\mathbb{P}\left(\exp\left(\frac{\lVert\mathbf{\Lambda}\rVert}{d^{2}}\right)>\exp\left(\frac{1}{d^4}\binom{n}{d}\right)\right)\leq \exp\left(-\frac{1}{d^4}\binom{n}{d}+\binom{n}{d}\frac{d^{9}}{(1+\lambda)^{d-3/2}}\right).
\]
Since $(1+\lambda)^{d-\frac{3}{2}}$ grows much faster than $d^{9}$, when $\lambda\geq C_0 \ln(d)/d^{1/3}$ and $d$ tends to infinity, we conclude that
\[
\mathbb{P}\left(\lVert\mathbf{\Lambda}\rVert>\frac{1}{d^{2}}\binom{n}{d}\right)\leq \exp\left(-\frac{1}{d^5}\binom{n}{d}\right).
\]
\end{proof}

For an independent set $I\in \mathcal{I}(\mathcal{B}(n, d))$, we say $\mathcal{L}_{d}$ is the \textit{minority side} of $I$, denoted by $\mathcal{M}$, if $|I\cap \mathcal{L}_d| < |I\cap \mathcal{L}_{d-1}|$ and the \textit{majority side} otherwise. 
Respectively we say $\mathcal{L}_{d-1}$ is the \textit{minority side} if $|I\cap \mathcal{L}_d| \leq |I\cap \mathcal{L}_{d-1}|$ and the \textit{majority side} otherwise.
We define a probability measure $\hat{\mu}$ on $\mathcal{I}(\mathcal{B}(n, d)) $ by constructing an independent set $I$ of $\mathcal{B}(n, d)$ in the following manner:
\begin{itemize}
    \item[1.] First, choose $\mathcal{D}\in \{\mathcal{L}_{d},\  \mathcal{L}_{d-1}\}$ uniformly at random, and we call the layer $\mathcal{D}$ as the \textit{defect side} of $I$;\footnote{These terms \textit{minority side}, \textit{majority side} and \textit{defect side} are borrowed from~\cite{jenssen2019independent}.}
    \item[2.] Let $\mathbf{\Lambda}$ be a random independent set of the polymer model $\mathcal{P}_\mathcal{D}$ drawn with distribution $\nu$ (see~(\ref{def:distrinv})), and assign $I\cap \mathcal{D}=\cup_{S\in \Lambda}S$;
    \item[3.] Let $\mathcal{N}$ be the non-defect side. For each $v\in \mathcal{N}\setminus N(\mathbf{\Lambda})$, we add $v$ to the set $I$ independently with probability $\frac{\lambda}{1+\lambda}.$
\end{itemize}

\begin{lemma}\label{Min side is defect side}
Let $\mathbf I$ be a random independent set drawn from the distribution $\hat{\mu}$. Then with probability at least $1- 2\exp\left(-\frac{1}{d^5}\binom{n}{d}\right)$, the minority side of $\mathbf I$ is the defect side. 
\end{lemma}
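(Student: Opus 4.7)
The plan is to show that, with probability at least $1-2\exp(-\binom{n}{d}/d^5)$, the set $\mathbf I$ contains far fewer vertices on the defect side $\mathcal{D}$ than on the non-defect side $\mathcal{N}$, so that $\mathcal{D}$ is automatically the minority side (the strict inequality I establish covers both tie-breaking conventions in the definition).

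First, apply Lemma~\ref{Independent set is small} to the polymer configuration $\mathbf{\Lambda}$ produced in Step~2 of $\hat{\mu}$: with probability at least $1-\exp(-\binom{n}{d}/d^5)$, $\|\mathbf{\Lambda}\|\leq \binom{n}{d}/d^2$. Since any two polymers in $\mathbf{\Lambda}\in\Omega_{\mathcal{P}}$ are non-adjacent in $H_{\mathcal{P}}$ (hence vertex-disjoint in $\mathcal{D}$), we have $|\mathbf I\cap\mathcal{D}|=\|\mathbf{\Lambda}\|\leq \binom{n}{d}/d^2$; by the $d$-regularity of $\mathcal{B}(n,d)$ this also yields $|N(\mathbf{\Lambda})|\leq d\|\mathbf{\Lambda}\|\leq \binom{n}{d}/d$.

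Next, condition on $\mathbf{\Lambda}$ satisfying the bound above. By Step~3 of $\hat{\mu}$, $\mathbf I\cap\mathcal{N}$ is a $\mathrm{Bernoulli}(\lambda/(1+\lambda))$ sample of the $|\mathcal{N}\setminus N(\mathbf{\Lambda})|\geq \binom{n}{d}(1-1/d)$ available vertices. The conditional mean of $|\mathbf I\cap\mathcal{N}|$ is therefore at least $\tfrac{1}{2}\binom{n}{d}\,\lambda/(1+\lambda)$, which is $\Omega\!\bigl(\binom{n}{d}\ln d/d^{1/3}\bigr)$ since $\lambda\geq C_0\ln d/d^{1/3}$ with $\lambda$ bounded. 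A standard multiplicative Chernoff bound then gives $|\mathbf I\cap\mathcal{N}|\geq \binom{n}{d}/d^2$ except with conditional probability $\exp\!\bigl(-\Omega(\binom{n}{d}\ln d/d^{1/3})\bigr)$, which is far smaller than $\exp(-\binom{n}{d}/d^5)$.

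Combining both estimates by a union bound yields $|\mathbf I\cap\mathcal{D}|<|\mathbf I\cap\mathcal{N}|$ with probability at least $1-2\exp(-\binom{n}{d}/d^5)$, so $\mathcal{D}$ is the minority side. There is no real obstacle here: the entire argument reduces to the numerical gap between the polymer-size bound $\binom{n}{d}/d^2$ and the Bernoulli mass $\gtrsim \binom{n}{d}\ln d/d^{1/3}$, which is precisely what the hypothesis $\lambda\geq C_0\ln d/d^{1/3}$ guarantees.
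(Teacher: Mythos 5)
Your proposal is correct and follows essentially the same route as the paper: invoke Lemma~\ref{Independent set is small} to control $\lVert\mathbf{\Lambda}\rVert$, observe that conditionally $|\mathbf I\cap\mathcal{N}|$ is binomial on at least $(1-1/d)\binom{n}{d}$ vertices, apply Chernoff, and combine the two failure probabilities. The only cosmetic difference is that you phrase the combination as a union bound over the bad event rather than the paper's explicit split of $\mathbb{P}(\mathcal{M}\neq\mathcal{D})$ into the two cases, which is the same computation.
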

\begin{proof}
Let $\mathcal{M}$ be the minority side, $\mathcal{D}$ be the defect side, and $\mathcal{N}$ be the non-defect side of $\mathbf I$. 
By taking components of $\mathbf I\cap \mathcal{D}$ as polymers, there exists a unique independent set $\Lambda$ of $\mathcal{P}_{\mathcal{D}}$ such that $\mathbf I\cap \mathcal{D}=\cup_{S\in \Lambda}S$. Splitting the probability into two cases we get
\[
\begin{split}
\mathbb{P}(\mathcal{M} \neq \mathcal{D})=& \mathbb{P}\left(\mathcal{M}\neq \mathcal{D},\  \lVert\mathbf{\Lambda}\rVert\leq \frac{1}{d^2}\binom{n}{d}\right)
+ \mathbb{P}\left(\mathcal{M}\neq \mathcal{D},\  \lVert\mathbf{\Lambda}\rVert> \frac{1}{d^2}\binom{n}{d}\right).
\end{split}
\]
From Lemma \ref{Independent set is small}, we get
\[
\mathbb{P}\left(\mathcal{M}\neq \mathcal{D},\  \lVert\mathbf{\Lambda}\rVert> \frac{1}{d^2}\binom{n}{d}\right)\leq \exp\left(-\frac{1}{d^5}\binom{n}{d}\right).
\]
Using conditional probability we get
\[
\mathbb{P}\left(\mathcal{M}\neq \mathcal{D},\  \lVert\mathbf{\Lambda}\rVert\leq \frac{1}{d^2}\binom{n}{d}\right)=\sum_{ \lVert\Lambda\rVert \leq \frac{1}{d^2}\binom{n}{d}}\mathbb{P}(\mathbf{\Lambda}=\Lambda)\mathbb{P}\left(\mathcal{M}\neq \mathcal{D} \mid \mathbf{\Lambda}=\Lambda\right).
\]

By the definition of $\hat{\mu}$, if we fix a $\Lambda$ with $\lVert \Lambda\rVert< \frac{1}{d^2}\binom{n}{d}$, then for each $v\in \mathcal{L}_{\mathcal{N}}\setminus N(\Lambda)$ we have $\mathbb{P}(v\in \mathbf{I}\cap\mathcal{L}_\mathcal{N})=\lambda/(1+\lambda)$. Then $|\mathbf I \cap \mathcal{N}|$ follows a binomial distribution $\mathrm{Bin}\left(K,\lambda/(1+\lambda)\right)$, for some $K=\binom{n}{d} - |N(\Lambda)|> \left(1-\frac{1}{d}\right)\binom{n}{d}$.    
Recall that $\lambda\geq C_0\ln(d)/d^{1/3}$ and it is bounded as $d$ goes to infinity. Using the Chernoff bound, we obtain that
\[
\mathbb{P}\left(|\mathbf{I}\cap\mathcal{N}|\leq \frac{1}{d^2}\binom{n}{d}\right)=\mathbb{P}\left(\mathrm{Bin}\left(K,\frac{\lambda}{1+\lambda}\right)\leq \frac{1}{d^2}\binom{n}{d}\right)\leq \exp\left(-\frac{1}{d^{1/3}}\binom{n}{d}\right).
\]
Therefore
\[
\mathbb{P}\left(\mathcal{M}\neq \mathcal{D},\  \lVert\mathbf{\Lambda}\rVert\leq \frac{1}{d^2}\binom{n}{d}\right)\leq \sum_{ \lVert\Lambda\rVert \leq \frac{1}{d^2}\binom{n}{d}}\mathbb{P}(\mathbf{\Lambda}=\Lambda)\exp\left(-\frac{1}{d^{1/3}}\binom{n}{d}\right)\leq \exp\left(-\frac{1}{d^{1/3}}\binom{n}{d}\right).
\]

Finally, we conclude that
\begin{align*}
\mathbb{P}(\mathcal{M} \not= \mathcal{D})&\leq \exp\left(-\frac{1}{d^{1/3}}\binom{n}{d}\right)+\exp\left(-\frac{1}{d^5}\binom{n}{d}\right)\leq 2\exp\left(-\frac{1}{d^5}\binom{n}{d}\right). 
\end{align*}
\end{proof}

\begin{lemma}\label{lem: appro} Suppose $\lambda\geq C_0 \ln(d)/d^{1/3}$ and $\lambda$ is bounded as $d\rightarrow \infty$. Then  
\[
\left|\ln Z(\lambda) - \ln \left(2(1 + \lambda)^{\binom{n}{d}}\Xi(\mathcal{P},w)\right)\right|=O\left(\exp\left(-\frac{1}{d^5}\binom{n}{d}\right)\right).
\]
\end{lemma}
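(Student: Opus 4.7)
The plan is to show $Z(\lambda)/[2(1+\lambda)^N\Xi(\mathcal{P},w)]=1+\eta$ with $|\eta|=O(\exp(-\tfrac{1}{d^5}\binom{n}{d}))$, whence the statement follows from $|\ln(1+\eta)|\le 2|\eta|$. For $\mathcal{D}\in\{\mathcal{L}_d,\mathcal{L}_{d-1}\}$, let $\mathcal{I}_\mathcal{D}$ denote the set of independent sets $I$ of $\mathcal{B}(n,d)$ whose every 2-linked component inside $\mathcal{D}$ is a polymer (i.e., has closure of size at most $\tfrac12\binom{n}{d}$). The starting point is the identity
\begin{equation*}
2(1+\lambda)^N\Xi(\mathcal{P},w) \;=\; \sum_{I\in\mathcal{I}_{\mathcal{L}_d}}\lambda^{|I|} \;+\; \sum_{I\in\mathcal{I}_{\mathcal{L}_{d-1}}}\lambda^{|I|},
\end{equation*}
which I would derive by unfolding $\Xi$ and noting that non-adjacent polymers have disjoint neighborhoods (since $N(S)\cap N(S')\ne\emptyset$ forces $S\cup S'$ to be 2-linked). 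Thus $(1+\lambda)^N\Xi(\mathcal{P}_\mathcal{D},w)=\sum_\Lambda\lambda^{\|\Lambda\|}\sum_{T\subseteq \mathcal{N}\setminus N(\Lambda)}\lambda^{|T|}=\sum_{I\in\mathcal{I}_\mathcal{D}}\lambda^{|I|}$, and $\Xi(\mathcal{P}_{\mathcal{L}_d},w)=\Xi(\mathcal{P}_{\mathcal{L}_{d-1}},w)$ by symmetry. Inclusion--exclusion then yields
\begin{equation*}
Z(\lambda)-2(1+\lambda)^N\Xi(\mathcal{P},w) \;=\; \sum_{I\notin\mathcal{I}_{\mathcal{L}_d}\cup\mathcal{I}_{\mathcal{L}_{d-1}}}\lambda^{|I|} \;-\; \sum_{I\in\mathcal{I}_{\mathcal{L}_d}\cap\mathcal{I}_{\mathcal{L}_{d-1}}}\lambda^{|I|},
\end{equation*}
so the task reduces to bounding both remainder sums by $O(\exp(-\tfrac{1}{d^5}\binom{n}{d}))\cdot (1+\lambda)^N\Xi(\mathcal{P},w)$.

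For the overlap term I would apply Lemma~\ref{Min side is defect side} directly. Every $I\in\mathcal{I}_{\mathcal{L}_d}\cap\mathcal{I}_{\mathcal{L}_{d-1}}$ is produced by $\hat\mu$ via exactly two generation paths (one per choice of defect side), each of mass $\lambda^{|I|}/[2(1+\lambda)^N\Xi(\mathcal{P},w)]$, and precisely one of these paths assigns a defect side $\mathcal{D}\ne\mathcal{M}(I)$. Summing over such $I$,
\begin{equation*}
\frac{1}{2(1+\lambda)^N\Xi(\mathcal{P},w)}\sum_{I\in\mathcal{I}_{\mathcal{L}_d}\cap\mathcal{I}_{\mathcal{L}_{d-1}}}\lambda^{|I|} \;\le\; \mathbb{P}_{\hat\mu}(\mathcal{M}\ne\mathcal{D}) \;\le\; 2\exp\!\left(-\tfrac{1}{d^5}\binom{n}{d}\right),
\end{equation*}
which rearranges to the desired bound on the overlap.

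The main obstacle is the missing sum $\sum_{I\notin\mathcal{I}_{\mathcal{L}_d}\cup\mathcal{I}_{\mathcal{L}_{d-1}}}\lambda^{|I|}$, for which no generation path under $\hat\mu$ exists. For every such $I$ there must exist 2-linked components $S_1\subseteq I\cap\mathcal{L}_d$ and $S_2\subseteq I\cap\mathcal{L}_{d-1}$ with $|[S_i]|>N/2$. A one-line edge count (using $N(S)=N([S])$ and $d$-regularity) gives $|N(S_i)|\ge|[S_i]|>N/2$, hence $|S_i|>N/(2d)$ and $|I\cap\mathcal{L}_{d-1}|,\,|I\cap\mathcal{L}_d|<N/2$. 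I would bound this sum by fixing the witness component $S_1$ on $\mathcal{L}_d$; this forces the remainder of $I\cap\mathcal{L}_d$ into the set of vertices of $\mathcal{L}_d$ at distance exceeding $2$ from $S_1$ (which has size $<N/2$ since $|[S_1]|>N/2$), and $I\cap\mathcal{L}_{d-1}\subseteq \mathcal{L}_{d-1}\setminus N(S_1)$. I would then sum a geometric-type factor over compatible extensions, and aggregate over $S_1$ via an adaptation of Sapozhenko's container construction to this "large-closure" regime. The combined gain from $(1+\lambda)^{-|N(S_1)|}\le (1+\lambda)^{-N/2}$ (and the analogous loss through $S_2$) should produce the required $\exp(-\Omega(\binom{n}{d}/d^5))$ bound. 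Combining the two estimates and taking logarithms finishes the proof.
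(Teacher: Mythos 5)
Your identity $2(1+\lambda)^{N}\Xi(\mathcal{P},w)=\sum_{I\in\mathcal{I}_{\mathcal{L}_d}}\lambda^{|I|}+\sum_{I\in\mathcal{I}_{\mathcal{L}_{d-1}}}\lambda^{|I|}$ and your treatment of the overlap term via two generation paths under $\hat\mu$ together with Lemma~\ref{Min side is defect side} are exactly the paper's argument. The gap is in the term you single out as the main obstacle, namely $\sum_{I\notin\mathcal{I}_{\mathcal{L}_d}\cup\mathcal{I}_{\mathcal{L}_{d-1}}}\lambda^{|I|}$. You propose to control it by fixing a witness component $S_1$ and running an adaptation of Sapozhenko's container construction in the ``large-closure'' regime; this is not carried out, and Lemma~\ref{lem:container} as stated only covers $a\le\frac12\binom{n}{d}$, so as written this part of your argument is a plan rather than a proof.

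In fact no new machinery is needed, because that set of independent sets is \emph{empty}, and you are one step away from seeing it. Suppose $I$ had 2-linked components $S_1\subseteq I\cap\mathcal{L}_d$ and $S_2\subseteq I\cap\mathcal{L}_{d-1}$ with $|[S_1]|,|[S_2]|>\frac12\binom{n}{d}$. As you already note, $d$-regularity gives $|N(S_1)|=|N([S_1])|\ge|[S_1]|>\frac12\binom{n}{d}$. Since $N(S_1)$ and $[S_2]$ are both subsets of $\mathcal{L}_{d-1}$, which has exactly $\binom{n}{d}$ vertices, they must intersect; pick $v\in[S_2]\cap N(S_1)$. Then $v$ has a neighbour $u\in S_1\subseteq I$, and since $v\in[S_2]$ we have $N(v)\subseteq N(S_2)$, so $u\in N(S_2)$ and hence $u$ has a neighbour $v'\in S_2\subseteq I$. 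The adjacent pair $u,v'\in I$ contradicts independence. With this observation your inclusion--exclusion collapses to $Z(\lambda)=2(1+\lambda)^{N}\Xi(\mathcal{P},w)-\sum_{I\in\mathcal{I}_{\mathcal{L}_d}\cap\mathcal{I}_{\mathcal{L}_{d-1}}}\lambda^{|I|}$, and the remainder of your argument (the overlap bound and $|\ln(1+\eta)|\le 2|\eta|$) completes the proof exactly as in the paper.
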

\begin{proof}
Let $\mathcal{M}_d$ be the collection of sets $I$ in $\mathcal{I}(\mathcal{B}(n, d))$ such that every $2$-linked component $S$ of $I\cap \mathcal{L}_{d}$ satisfies $|[S]|\leq \frac{1}{2}\binom{n}{d}$.
We define $\mathcal{M}_{d-1}$ similarly.
A simple counting argument indicates that
\[
\sum_{I\in \mathcal{M}_{d}}\lambda^{|I|}
=\sum_{\Lambda\in \Omega_{\mathcal{P}}}\sum_{i=0}^{\binom{n}{d}-|N(\Lambda)|}\binom{\binom{n}{d}-|N(\Lambda)|}{i}\lambda^{\rVert\Lambda\lVert+i}=\sum_{\Lambda\in\Omega_{\mathcal{P}}}\lambda^{\rVert\Lambda\lVert}(1+\lambda)^{\binom{n}{d}-|N(\Lambda)|}=(1+\lambda)^{\binom{n}{d}}\Xi(\mathcal{P},w).
\]

We first claim that every independent set $I$ of $\mathcal{B}(n, d)$ is in $\mathcal{M}_{d}\cup\mathcal{M}_{d-1}$.
Assume that this is not true for some set $I$. Then there exist a set $S_1\subseteq I\cap \mathcal{L}_{d}$ and a set $S_{2}\subseteq I\cap \mathcal{L}_{d-1}$ such that both of them are $2$-linked sets, and $|[S_1]|, |[S_2]|\geq \frac{1}{2}\binom{n}{d}$. 
Since $\mathcal{B}(n,d)$ is $d$-regular, we have that $|N(S_1)|=|N([S_1])|\geq |[S_1]|\geq \frac{1}{2}\binom{n}{d}$. This implies that there exists a vertex $v\in [S_2]\cap N(S_1)$. Then there is a vertex $u\in S_1$ with $u\sim v$.
Moreover, since $v\in [S_2]$, we have $u\in N([S_2])=N(S_2)$, and therefore there is a vertex $v'\in S_2$ with $u\sim v'$. Since both $u, v'$ belong to $I$ and they are adjacent, it contradicts the assumption that $I$ is independent.

Let $\mathcal{B}=\mathcal{M}_{d}\cap\mathcal{M}_{d-1}$. We then obtain that
\begin{equation}\label{eq:Zapprox}
2(1+\lambda)^{\binom{n}{d}}\Xi(\mathcal{P},w)
=\sum_{I\in \mathcal{M}_{d}}\lambda^{|I|} +
\sum_{I\in \mathcal{M}_{d-1}}\lambda^{|I|}
=Z(\lambda)+\sum_{I\in \mathcal{B}}\lambda^{|I|}.
\end{equation}
Take a random independent set $\mathbf{I}$ drawn from $\hat{\mu}$. 
Then we have
\[
\begin{split}
\mathbb{P}(\mathbf{I}\in \mathcal{B}\wedge \mathcal{M}\neq \mathcal{D})
&=\sum_{I\in \mathcal{B}}\mathbb{P}(\mathbf{I}=I \wedge \mathcal{M}\neq \mathcal{D})\\
&=\sum_{I\in \mathcal{B}}\frac{1}{2}\cdot \nu(I\cap\mathcal{D})\left(\frac{\lambda}{1+\lambda}\right)^{|I| - |I\cap \mathcal{D}|}\left(\frac{1}{1+\lambda}\right)^{\binom{n}{d}-|N(I\cap\mathcal{D})|-(|I| - |I\cap \mathcal{D}|)}\\
&=\sum_{I\in \mathcal{B}}\frac12 \cdot\frac{1}{ \Xi(\mathcal{P},w)}\frac{\lambda^{|I\cap\mathcal{D}|}}{(1+\lambda)^{|N(I\cap\mathcal{D})|}}\frac{\lambda^{|I| - |I\cap \mathcal{D}|}}{(1+\lambda)^{\binom{n}{d}-|N(I\cap\mathcal{D})|}}\\
&=\frac{1}{2(1+\lambda)^{\binom{n}{d}}\Xi(\mathcal{P},w)}\sum_{I\in \mathcal{B}}\lambda^{|I|}.
\end{split}
\]
Together with Lemma \ref{Min side is defect side}, we have
\[
\sum_{I\in \mathcal{B}}\lambda^{|I|}
\leq 2(1+\lambda)^{\binom{n}{d}}\Xi(\mathcal{P},w)\cdot \mathbb{P}(\mathcal{M}\neq \mathcal{D})
\leq 2(1+\lambda)^{\binom{n}{d}}\Xi(\mathcal{P},w)\cdot 2\exp \left(-\frac{1}{d^5}\binom{n}{d}\right).
\]
This and (\ref{eq:Zapprox}) complete the proof.
\end{proof}

Now we have all ingredients for the proof of Theorem~\ref{thm:indpoly}.\\
\noindent\textit{Proof of Theorem~\ref{thm:indpoly}.}
By Corollary~\ref{cor:conv}, we have 
\[
\left|\sum_{i=1}^k L_i - \ln \Xi(\mathcal{P},w)\right| \leq \binom{n}{d}\frac{d^{11(k+1)-2}}{(1+\lambda)^{d(k+1)-3(k+1)^2/2}}.
\]
Together with Lemma~\ref{lem: appro}, we obtain that 
\[
Z(\lambda)
=2(1 + \lambda)^{\binom{n}{d}}\Xi(\mathcal{P},w)\exp\left[O\left(\exp\left(-\frac{1}{d^5}\binom{n}{d}\right)\right)\right]
=2(1 + \lambda)^{\binom{n}{d}}\exp\left(\sum_{i=1}^k L_i + \eps_k\right),
\]
where
\[
\eps_k \leq \binom{n}{d}\frac{d^{11(k+1)-2}}{(1+\lambda)^{d(k+1)-3(k+1)^2/2}} + O\left(\exp\left(-\frac{1}{d^5}\binom{n}{d}\right)\right)
=O\left(\binom{n}{d}\frac{d^{11k+9}}{(1+\lambda)^{d(k+1)-3(k+1)^2/2}} \right).
\]
\qed

\section{Proof of Lemma~\ref{lem:container}}\label{sec:cont}
The proof of Lemma~\ref{lem:container} relies on the following three lemmas from Galvin~\cite{galvin2011threshold} and Galvin and Tetali~\cite{galvin2006slow}, which are essentially built on Sapozhenko's graph container method~\cite{sapozhenko1987number}.
We also use the notation $\binom{n}{\leq k}$ as  a shorthand for $\sum_{0\leq i\leq k}\binom{n}{i}$.
\begin{lemma}\label{lem:con1}\cite{galvin2011threshold}
Let $\Sigma$ be a $d$-regular bipartite graph with bipartition classes $X$ and $Y$. Let $\mathcal{G}=\{A\subseteq X: A \text{ is 2-linked},\ |[A]|=a,\ |N(A)|=b\}$, and set $t=b-a$. Fix $1\leq \varphi\leq d-1$. Let
\[
m_{\varphi}=\min\{|N(K)|: y\in Y,\ K\subseteq N(y),\ |K|>\varphi\}.
\]
Let $C>0$ be an arbitrary number such that $C\ln d/(\varphi d)<1$. Then there is a family $\mathcal{A}_1\subseteq 2^Y\times 2^X$ with
\begin{equation}\label{certi1}
|\mathcal{A}_1|\leq |Y|\exp\left(
\frac{78bC\ln^2d}{\varphi d} + \frac{78b\ln d}{d^{Cm_{\varphi}/(\varphi d)}} + \frac{78t\ln^2d}{d - \varphi}
\right)
\binom{\frac{3bC\ln d}{\varphi}}{\leq \frac{3tC\ln d}{\varphi}}
\binom{db}{\leq dt/(\varphi(d-\varphi))}
\end{equation}
and a map $\pi_1: \mathcal{G} \rightarrow \mathcal{A}_1$ for which
$\pi_1(A):=(F^*, S^*)$ satisfies $F^*\subseteq N(A)$, $S^*\supseteq [A]$, and
\[
|N(A)\setminus F^*|\leq td/(d - \varphi), \qquad |S^*\setminus [A]|\leq td/(d - \varphi).
\]
\end{lemma}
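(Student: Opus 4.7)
I will follow the strategy of Galvin~\cite{galvin2011threshold}, where an analogous bound is proved for $Q_n$, and adapt it to $\mathcal{B}(n,d)$. The principal novelty of the adaptation is that the small parameter becomes $d^{-2/3}$ rather than Galvin's $d^{-1/2}$, and this is made possible by the stronger isoperimetric estimates collected in Lemma~\ref{lem:satu}.

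\textbf{Step 1 (Applying Lemma~\ref{lem:con1}).} I invoke Lemma~\ref{lem:con1} for $\Sigma = \mathcal{B}(n,d)$ with $X = \mathcal{D}$, $Y$ the opposite layer, $\varphi = d^{2/3}$, and a sufficiently large absolute constant $C$. Any $K \subseteq N(y)$ has $|K| \leq d$, so when $|K| > \varphi$ Lemma~\ref{lem:satu}(ii) yields $|N(K)| \geq d|K|/6$, whence $m_\varphi \geq d\varphi/6 = d^{5/3}/6$ and $Cm_\varphi/(\varphi d) \geq C/6$. Choosing $C$ large enough makes each error factor in (\ref{certi1}) contribute at most $\exp(O(t \ln d / d^{2/3}))$, with $t = b - a$; the overall $|Y| = \binom{n}{d}$ prefactor survives. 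The output is a map $\pi_1\colon \mathcal{G}(a,b) \to \mathcal{A}_1$ assigning each $A$ an approximating pair $(F^*, S^*)$ with $F^* \subseteq N(A)$, $S^* \supseteq [A]$, and $|N(A) \setminus F^*|, |S^* \setminus [A]| = O(t)$.

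\textbf{Step 2 (Passing from a count to a weighted sum).} I first observe that $[A]$ is $2$-linked whenever $A$ is: any $v \in [A] \setminus A$ satisfies $N(v) \subseteq N(A)$, so picking $w \in N(v)$ and then $u \in A$ with $w \in N(u)$ puts $v$ at graph-distance $2$ from $A$. Hence $[A] \in \mathcal{G}'(a,b) := \{C \subseteq \mathcal{D}: C \text{ is 2-linked}, C = [C], |C| = a, |N(C)| = b\}$. Grouping the sets in $\mathcal{G}(a,b)$ by their closure and using $\sum_{A \subseteq C}\lambda^{|A|} = (1+\lambda)^{|C|} = (1+\lambda)^a$ yields
\[
\sum_{A \in \mathcal{G}(a,b)} \frac{\lambda^{|A|}}{(1+\lambda)^{b}} \;\leq\; \frac{|\mathcal{G}'(a,b)|}{(1+\lambda)^{t}},
\]
reducing the task to bounding $|\mathcal{G}'(a,b)|$ by $\binom{n}{d} \exp(O(t \ln d / d^{2/3}))$.

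\textbf{Step 3 (Counting closures via a second-round refinement).} Since $C = [C]$ is determined by $N(C)$, and Step~1 gives $F^* \subseteq N(C)$ with $|N(C) \setminus F^*| = O(t)$, counting $C$ reduces to counting completions of $F^*$ inside the larger boundary. A naive choice from $N(S^*)$ costs $\exp(O(t \ln d))$, far too much. I instead mimic the second-round Sapozhenko arguments in~\cite{galvin2011threshold, galvin2019independent}: one iterates the container construction at an intermediate scale $\varphi' \ll d^{2/3}$, uses the 2-linkage inherited by the defect sets from $C$ together with Lemma~\ref{lem:satu}(i) at small scales, and thereby caps each completion by $\exp(O(t \ln d / d^{2/3}))$. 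Combined with the bound on $|\mathcal{A}_1|$ from Step~1, this delivers
\[
|\mathcal{G}'(a,b)| \;\leq\; \binom{n}{d} \exp\!\left( \frac{C_2 \, t \ln d}{d^{2/3}}\right)
\]
for an absolute constant $C_2$.

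\textbf{Step 4 (Closing the inequality).} Combining Steps~2 and~3,
\[
\sum_{A \in \mathcal{G}(a,b)} \frac{\lambda^{|A|}}{(1+\lambda)^{b}} \;\leq\; \binom{n}{d} \, (1+\lambda)^{-t} \exp\!\left(\frac{C_2 \, t \ln d}{d^{2/3}}\right).
\]
The hypothesis $\lambda \geq C_0 (\ln d)/d^{1/3}$ gives $(1+\lambda)^{-t} \leq \exp\!\bigl(-\Omega(C_0) \, t \ln d / d^{1/3}\bigr)$, and since $d^{-1/3} \gg d^{-2/3}$, taking $C_0$ sufficiently large absorbs the positive error factor and produces the claimed bound with a positive constant $C_1$.

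\textbf{Main obstacle.} The technical heart is Step~3. A single application of Lemma~\ref{lem:con1} only localises $[A]$ within a window of size $O(t)$, and counting the remaining defect vertices directly from $N(S^*)$ loses a factor of $\exp(O(t \ln d))$ that the fugacity gain $(1+\lambda)^{-t}$ cannot absorb at the threshold $\lambda \asymp (\ln d)/d^{1/3}$. The entire improvement from $\exp(O(t \ln d))$ down to $\exp(O(t \ln d / d^{2/3}))$ must come from a careful choice of intermediate scales and an iterated container construction that exploits the precise small-scale isoperimetry of $\mathcal{B}(n,d)$ given by Lemma~\ref{lem:satu}; in particular, the improvement of the exponent from $1/2$ (for $Q_n$) to $2/3$ (here) is exactly the strength one obtains by using Lemma~\ref{lem:satu}(ii) in place of the weaker hypercube analogue.
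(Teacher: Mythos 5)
Your proposal does not prove the statement it was supposed to prove. The target is Lemma~\ref{lem:con1} itself --- the first-stage Sapozhenko approximation step: the existence of a family $\mathcal{A}_1$ whose size is bounded by (\ref{certi1}) together with a map $\pi_1$ that assigns to each $2$-linked $A$ a pair $(F^*,S^*)$ with $F^*\subseteq N(A)$, $S^*\supseteq[A]$ and both defects of size at most $td/(d-\varphi)$. Your very first step is ``I invoke Lemma~\ref{lem:con1}'', i.e.\ you assume the statement to be proved, and everything that follows is a sketch of how to deduce Lemma~\ref{lem:container} from Lemmas~\ref{lem:con1}--\ref{lem:con3}. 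Relative to the actual target the argument is circular and contains none of the required content. A genuine proof of Lemma~\ref{lem:con1} must carry out Sapozhenko's container construction \cite{sapozhenko1987number, galvin2011threshold}: starting from a single vertex of $[A]$ (whence the factor $|Y|$, after passing to a neighbour), one selects a small ``skeleton'' subset of $[A]$ covering $N(A)$ efficiently, and then iteratively refines the resulting coarse approximation by discarding vertices whose degree into the current approximating sets falls below thresholds governed by $\varphi$; the three exponential factors and the two binomial coefficients in (\ref{certi1}) account precisely for the number of choices made at each of these stages, and the bound $td/(d-\varphi)$ on the defects comes from the degree-threshold analysis. None of this machinery appears in your write-up.

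Two secondary remarks, in case your actual intention was to prove Lemma~\ref{lem:container}. First, the paper does not reduce to counting closed sets as in your Step 2; it keeps the weighted sum throughout and bounds it via Lemma~\ref{lem:con3}, optimizing a parameter $\gamma$ against the binomial term --- your reduction via $\sum_{A\subseteq [A]}\lambda^{|A|}=(1+\lambda)^{a}$ is correct but discards information, and you would still need a substitute for Lemma~\ref{lem:con3} to finish. Second, your Step 3 defers the entire second-stage refinement (``mimic the second-round Sapozhenko arguments''); in the paper this is exactly the content of Lemma~\ref{lem:con2} together with the choices $\varphi=d/2$, $\psi=d^{2/3}$, and it is where the exponent $d^{-2/3}$ is actually produced, so it cannot be left as a black box. (Your parameter choice $\varphi=d^{2/3}$ also differs from the paper's $\varphi=d/2$; the paper puts the scale $d^{2/3}$ into $\psi$ in Lemma~\ref{lem:con2}, not into $\varphi$ in Lemma~\ref{lem:con1}.)
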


\begin{lemma}\label{lem:con2}\cite{galvin2011threshold}
Let $\Sigma$, $\mathcal{G}$ and $t$ be as in Lemma~\ref{lem:con1}. Let $(F^*, S^*)\in 2^Y\times 2^X$ and $x>0$ be given. Let 
\[
\mathcal{G}'=\{A\in \mathcal{G}: F^*\subseteq N(A),\ S^*\supseteq [A],\ |N(A)\setminus F^*|\leq x,\ \text{and}\ |S^*\setminus [A]|\leq x\}.
\]
Fix $0< \psi <d$. Then there is a constant $c>0$ (independent of $d, t, x$), a family $\mathcal{A}_2\subseteq 2^Y\times 2^X$ with
\begin{equation}\label{certi2}
|\mathcal{A}_2|\leq \exp\left(
\frac{cx}{d} + \frac{ct\ln d}{\psi}
\right)
\end{equation}
and a map $\pi_2: \mathcal{G}'\rightarrow \mathcal{A}_2$ for which $\pi_2(A):=(F, S)$ satisfies $F\subseteq N(A)$, $S\supseteq [A]$ and
\begin{equation}\label{eq:con2}
|S|\leq |F| + 2t\psi/(d -\psi).
\end{equation}
\end{lemma}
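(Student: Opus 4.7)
The plan is to adapt Sapozhenko's second-stage container refinement to produce $\pi_2(A)=(F,S)$ from the coarse approximation $(F^*, S^*)$. Two structural observations drive the construction: (i) every $y \notin N(A)$ has $N(y) \cap [A] = \emptyset$ (since $N([A]) = N(A)$), so $|N(y) \cap S^*| \leq |S^* \setminus [A]| \leq x$; and (ii) every $u \in [A]$ satisfies $|N(u) \setminus F^*| \leq |N(A) \setminus F^*| \leq x$, together with the double-counting identity $\sum_{u \in [A]} |N(u) \setminus F^*| = \sum_{y \in N(A) \setminus F^*} |N(y) \cap [A]| \leq xd$.

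Given $(F^*, S^*, \psi)$, the first step introduces the canonical sets
\[
F^{\flat} := \{y \in Y : |N(y) \cap S^*| \geq d - \psi\}, \qquad S^{\flat} := \{u \in S^* : |N(u) \cap F^*| \geq d - \psi\},
\]
which depend only on the inputs and therefore require no encoding. By observation (i), $F^{\flat} \subseteq N(A)$ in the meaningful regime $\psi < d - x$ (outside it the lemma's bound is vacuous). By observation (ii) and the double-counting identity, the defect $[A] \setminus S^{\flat} \subseteq \{u \in [A] : \psi < |N(u) \setminus F^*| \leq x\}$ has size at most $xd/\psi$. The certificate then specifies (a) a subset $F^{+} \subseteq F^{\flat} \setminus F^*$ of size $O(t/\psi)$ to adjoin, giving $F := F^* \cup F^{+} \subseteq N(A)$; and (b) a correction $S^{+} \supseteq [A] \setminus S^{\flat}$ of size $O(t/\psi)$, giving $S := S^{\flat} \cup S^{+} \supseteq [A]$. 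Encoding $F^{+}$ costs at most $\exp(O(x/d))$---the $1/d$ saving arises from the $d$-regularity of $\Sigma$, since each chosen vertex accounts for up to $d$ elements of the neighborhood structure---while encoding $S^{+}$ costs $\exp(O((t/\psi)\ln d))$ from choosing $O(t/\psi)$ anchors in a polynomial-size local universe.

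To verify $|S| \leq |F| + 2t\psi/(d-\psi)$, the central inequality comes from double-counting along the $d-\psi$ threshold in the definition of $S^{\flat}$: the relation $(d-\psi)|S^{\flat}| \leq \sum_{u \in S^{\flat}} |N(u) \cap F| \leq d|F|$ yields $|S^{\flat}| \leq d|F|/(d-\psi)$, i.e.\ $|S^{\flat}| - |F| \leq |F|\psi/(d-\psi)$. The remaining contributions $|S^{+} \setminus S^{\flat}| - |F^{+}|$ are $O(t/\psi)$, and combining with $|F| \leq b = a + t$ and $|[A]| = a$, careful bookkeeping collapses the excess $|S| - |F|$ to the target $2t\psi/(d-\psi)$.

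The main obstacle is to exhibit certificates of size $O(t/\psi)$ (rather than the naive $O(x/\psi)$) for both $F^{+}$ and $S^{+}$: this savings is essential for matching the bound $\exp(cx/d + ct\ln d/\psi)$. Achieving it requires showing that the defects $N(A) \setminus F^*$ and $[A] \setminus S^{\flat}$ cluster into $O(t/\psi)$ groups, each admitting a single anchor in a polynomial-size universe---a geometric fact that ultimately rests on $A$ being $2$-linked and on the $d$-regularity of $\Sigma$. Verifying these cover bounds in general $d$-regular bipartite graphs, rather than invoking special features of the hypercube, is the technical heart of the argument, and is where Sapozhenko's original set-pair and antichain counting is most delicately applied.
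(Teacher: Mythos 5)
First, a point of comparison: the paper does not prove this lemma at all --- it is quoted from Galvin \cite{galvin2011threshold} (the ``second approximation'' step of Sapozhenko's container method, built on Galvin--Tetali), so there is no in-paper proof to measure your sketch against. Your proposal does capture the right flavour (refining $(F^*,S^*)$ via degree thresholds at $d-\psi$ and double counting), but it has concrete gaps that you only partly acknowledge. The most serious is your treatment of $F^{\flat}=\{y:|N(y)\cap S^*|\ge d-\psi\}$: the inclusion $F^{\flat}\subseteq N(A)$ requires $x<d-\psi$, and you dismiss the complementary case as one where ``the lemma's bound is vacuous''. It is not. In the application in Section~7 one takes $\varphi=d/2$, so $x=td/(d-\varphi)=2t$ with $t=b-a$ possibly as large as order $\binom{n}{d}/d$; thus $x\gg d$ in exactly the regime where the bound $\exp(cx/d+ct\ln d/\psi)$ is the non-trivial estimate that the proof of Lemma~\ref{lem:container} needs. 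Your canonical set $F^{\flat}$ therefore cannot serve as a certified subset of $N(A)$ where it matters.

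Two further steps do not close. Your bound $|[A]\setminus S^{\flat}|\le xd/\psi$ is larger by a factor of order $d$ than the $O(t/\psi)$ you need, so encoding this defect set directly costs far more than the budget $\exp(ct\ln d/\psi)$; you defer the required saving to an unproved ``geometric fact'' about clustering of defects, but in the actual argument the certificate is not an encoding of defect sets at all --- it is a short binary string recording the decisions of an iterative procedure (roughly: while some $u\in S$ has $|N(u)\setminus F|\ge\psi$, either absorb $N(u)$ into $F$ if $u\in[A]$ or delete $u$ from $S$ otherwise), whose length is controlled by $|N(A)\setminus F^*|\le x$ and $|S^*\setminus[A]|\le x$. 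Finally, your verification of $|S|\le|F|+2t\psi/(d-\psi)$ does not work: the chain $(d-\psi)|S|\le e(S,F)\le d|F|$ only yields $|S|-|F|\le\psi|F|/(d-\psi)\le\psi(a+t)/(d-\psi)$, and since $a$ may vastly exceed $t$, no ``careful bookkeeping'' collapses this to $2t\psi/(d-\psi)$. One must invoke the second stopping condition (on vertices of $Y\setminus F$) to bound the discrepancies $|N(A)\setminus F|$ and $|S\setminus[A]|$ in terms of $t=|N(A)|-|[A]|$ rather than in terms of $|F|$ or $a$.
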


\begin{lemma}\label{lem:con3}\cite{galvin2006slow}
Let  $\Sigma$, $\mathcal{G}$ and $t$ be as in Lemma~\ref{lem:con1}. Let $\psi$ and $\gamma$ satisfy $1\leq \psi \leq d/2$ and $1\geq \gamma > \frac{-2\psi}{d - \psi}$. Fix $(F, S)\in 2^Y \times 2^X$ satisfying (\ref{eq:con2}), and $\lambda\geq C_0\ln d/d^{1/3}$, where $C_0$ is a sufficiently large constant. Then we have
\[
\sum \frac{\lambda^{|A|}}{(1+\lambda)^b}\leq
\max\left\{
(1 + \lambda)^{-\gamma t},\ \binom{3db}{\leq \frac{2t\psi}{d- \psi}+ \gamma t}(1 + \lambda)^{-t}
\right\},
\]
where the sum is over all $A\in \mathcal{G}$ satisfying $F\subseteq N(A)$ and $S\supseteq [A]$. 
\end{lemma}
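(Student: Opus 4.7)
The plan is to derive Lemma~\ref{lem:container} by a chain application of the container-style lemmas~\ref{lem:con1}, \ref{lem:con2}, and~\ref{lem:con3} (valid for any $d$-regular bipartite graph), with parameters tuned to the regime $\lambda \geq C_0 \ln d / d^{1/3}$. Two preliminary inputs are needed. First, the expansion quantity $m_\varphi$ for $\mathcal{B}(n,d)$ is easy to compute: for any $y \in \mathcal{L}_{d-1}$, $N(y) = \{y \cup \{i\} : i \notin y\}$, and a direct count shows $|N(K)| = k(d-1)+1$ for every $K \subseteq N(y)$ of size $k$ (and symmetrically for $y \in \mathcal{L}_d$), so $m_\varphi \geq \varphi(d-1) + d$ and $m_\varphi/(\varphi d) \geq 1$. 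Second, since $a = |[A]| \leq \frac{1}{2}\binom{n}{d}$, Lemma~\ref{lem:satu}(iii) gives $t := b-a \geq a/(2d-1)$, hence $b \leq 2dt$.

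I would fix $\varphi = d/2$, $C = 3$, $\psi = d^{2/3}$, and $\gamma = 1/d^{1/3}$. Applying Lemma~\ref{lem:con1}, each of the three exponential terms in the $|\mathcal{A}_1|$-bound is $O(t\ln^2 d/d)$ (using $b \leq 2dt$ to convert $b$-dependent terms, and $d^{Cm_\varphi/(\varphi d)} \geq d^3$ to kill the middle term), and both binomial factors reduce to $\exp(O(t\ln^2 d/d))$ by $(n/k)^k$-bounds with $n/k = O(d)$. Thus $|\mathcal{A}_1| \leq \binom{n}{d}\exp(O(t\ln^2 d/d))$. Next, Lemma~\ref{lem:con2} with $x = td/(d-\varphi) = 2t$ gives $|\mathcal{A}_2| \leq \exp(O(t/d) + O(t\ln d/d^{2/3})) = \exp(O(t\ln d/d^{2/3}))$. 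For each $(F,S) \in \mathcal{A}_2$, Lemma~\ref{lem:con3} bounds the sum of $\lambda^{|A|}/(1+\lambda)^b$ over $A \mapsto (F,S)$ by $\max\{(1+\lambda)^{-\gamma t},\binom{3db}{\leq 2t\psi/(d-\psi) + \gamma t}(1+\lambda)^{-t}\}$. Since $C_0\ln d/d^{1/3} \to 0$, we have $\ln(1+\lambda) \geq C_0 \ln d/(2d^{1/3})$, so $(1+\lambda)^{-\gamma t} \leq \exp(-C_0 t \ln d/(2d^{2/3}))$, while the second option, bounded by $\exp(O(t\ln d/d^{1/3}) - C_0 t \ln d/(2d^{1/3}))$, is no larger than the first for $C_0$ sufficiently large.

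Multiplying the three bounds yields
\[
\sum_{A \in \mathcal{G}(a,b)} \frac{\lambda^{|A|}}{(1+\lambda)^b} \leq \binom{n}{d}\exp\!\Bigl(O(t \ln^2 d / d) + O(t \ln d / d^{2/3}) - \tfrac{C_0}{2}\, t \ln d/d^{2/3}\Bigr).
\]
Since $\ln^2 d / d = o(\ln d/d^{2/3})$, the first surplus is negligible, and choosing $C_0$ large enough to dominate the implicit constant in the $O(t\ln d/d^{2/3})$ term yields the claim (with, e.g., $C_1 = C_0/4$). The main obstacle is the parameter balancing: $\psi = d^{2/3}$ is the sweet spot that keeps the entropy in $|\mathcal{A}_2|$ at the target order $t\ln d/d^{2/3}$ while still allowing Lemma~\ref{lem:con3} to produce matching decay, and this in turn dictates the threshold $\lambda \geq C_0\ln d/d^{1/3}$, below which the $(1+\lambda)^{-\gamma t}$ term can no longer absorb the accumulated entropy surpluses.
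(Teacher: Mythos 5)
You have not proved the statement you were asked to prove. The statement is Lemma~\ref{lem:con3} itself --- the bound
$\sum \lambda^{|A|}/(1+\lambda)^b \leq \max\{(1+\lambda)^{-\gamma t},\ \binom{3db}{\leq 2t\psi/(d-\psi)+\gamma t}(1+\lambda)^{-t}\}$
over all $A\in\mathcal{G}$ compatible with a fixed container pair $(F,S)$ satisfying (\ref{eq:con2}). Your proposal instead proves Lemma~\ref{lem:container}, and it does so by \emph{invoking} Lemma~\ref{lem:con3} as a black box (``Lemma~\ref{lem:con3} bounds the sum \dots by $\max\{\dots\}$''). As an argument for the target statement this is circular: nowhere do you carry out the actual weighted count of the sets $A$ sandwiched between $F$ and $S$, which is the entire content of the lemma. (For the record, the paper does not prove this lemma either; it is imported verbatim from Galvin--Tetali \cite{galvin2006slow}, where the proof proceeds by splitting the $A$'s according to whether the ``uncovered'' part $N(A)\setminus F$ is large or small and comparing $\lambda^{|A|}(1+\lambda)^{-b}$ against the two branches of the max --- none of which appears in your write-up.)

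Read instead as a proof of Lemma~\ref{lem:container}, your argument is essentially the paper's own Section~7: same chain $\ref{lem:con1}\to\ref{lem:con2}\to\ref{lem:con3}$, same $\varphi=d/2$, $\psi=d^{2/3}$, $x=td/(d-\varphi)$, and the same bookkeeping that every entropy term is $O(t\ln^2 d/d)$ or $O(t\ln d/d^{2/3})$ and is beaten by $\gamma t\ln(1+\lambda)$ once $C_0$ is large. Two local remarks there: your direct computation $m_\varphi\geq \varphi(d-1)+d$ is correct and slightly cleaner than the paper's appeal to Lemma~\ref{lem:satu}(ii); but your fixed choice $\gamma=d^{-1/3}$ is cruder than the paper's $\gamma$ tuned to make the binomial factor exactly $(1+\lambda)^{(1-\gamma)t}$, and with your choice you must (and essentially do) check separately that the second branch of the max is dominated --- that step is fine. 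None of this, however, addresses the lemma actually in question.
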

We also use the following basic binomial estimate for $k=o(n)$:
\begin{equation}\label{eq:binom}
\binom{n}{\leq k}\leq \exp\left((1+o(1))k\ln \left(\frac{n}{k}\right)\right).
\end{equation}

\noindent\textit{Proof of Lemma~\ref{lem:container}.}
Fix $a, b$ with $a\leq \frac{1}{2}\binom{n}{d}$. By Lemma~\ref{lem:satu}(iii), we have 
\begin{equation}\label{eq:tbound}
t:=(b - a)\geq b/(2d).
\end{equation}
Set
\begin{equation}\label{eq:set}
    \Sigma=\mathcal{B}(n, d),\quad \varphi=d/2,\quad C=12, \quad \psi=d^{2/3},\quad x=td/(d - \varphi). 
\end{equation}
Lemma~\ref{lem:satu}(ii) implies that
\begin{equation}\label{eq:mbound}
m_{\varphi}\geq d^2/12.
\end{equation}

Applying Lemmas~\ref{lem:con1} and~\ref{lem:con2} on $\Sigma$ with the given $\varphi$, $\psi$ and $x$, we associate each $A\in \mathcal{G}$ with a pair of sets $(F, S)\in 2^{\mathcal{L}_d}\times 2^{\mathcal{L}_{d-1}}$, which satisfies $F\subseteq N(A)$, $S\supseteq [A]$ and (\ref{eq:con2}). Moreover, the number of such set pairs is at most $|\mathcal{A}_1|\cdot|\mathcal{A}_2|$, where the upper bounds on the sizes of $\mathcal{A}_1,$ $\mathcal{A}_2$ are given in Lemmas~\ref{lem:con1} and~\ref{lem:con2}.
Substituting (\ref{eq:binom}), (\ref{eq:tbound}), (\ref{eq:set}), and (\ref{eq:mbound}) into (\ref{certi1}) and (\ref{certi2}), we have
\[
\begin{split}
|\mathcal{A}_1|
&\leq\binom{n}{d}\exp\left(
O\left(\frac{b\ln^2 d}{d^2}\right)
+ O\left(\frac{b\ln d}{d^2}\right)
+ O\left(\frac{t\ln^2 d}{d}\right)
+ O\left(
\frac{t\ln d}{d}\ln \frac{b}{t}
\right)
+ O\left(
\frac{t}{d}\ln \frac{bd^2}{4t}
\right)
\right)\\
& = \binom{n}{d}\exp\left(
O\left(\frac{t\ln^2 d}{d}\right)
+ O\left(\frac{t\ln d}{d}\right)
+ O\left(\frac{t\ln^2 d}{d}\right)
+ O\left(
\frac{t\ln^2 d}{d}
\right)
+ O\left(
\frac{t\ln d}{d}
\right)
\right)\\
&=\binom{n}{d}\exp\left(
O\left(\frac{t\ln^2 d}{d}\right)
\right),
\end{split}
\]
and
\[
|\mathcal{A}_2|
\leq \exp\left(
O\left(\frac{t}{d}\right)
+ O\left(\frac{t\ln d}{d^{2/3}}\right)
\right)
\leq \exp\left(
 O\left(\frac{t\ln d}{d^{2/3}}\right)
\right).
\]
Fix a pair of set $(F, S)\in 2^{\mathcal{L}_d}\times 2^{\mathcal{L}_{d-1}}$ satisfying (\ref{eq:con2}). Set
\[
\gamma:=\frac{\ln (1 + \lambda) - \frac{6\psi\ln d}{d - \psi}}{\ln (1 + \lambda) + 3\ln d}
\geq \frac{(C_0/3) - 3}{d^{1/3}},    
\]
where the inequality follows from $\lambda\geq C_0\ln d/d^{1/3}$ for sufficiently large $C_0$ and $d$. Together with~(\ref{eq:tbound}), we have 
\[
\frac{2t\psi}{d - \psi} + \gamma t
\geq \left(\frac{2d^{2/3}}{d - d^{2/3}} + \frac{(C_0/3) - 3}{d^{1/3}}\right) t = \Omega(d^{-4/3}b),
\]
and therefore,
\[
\begin{split}
\binom{3db}{\leq \frac{2t\psi}{d- \psi}+ \gamma t}
&\leq \exp\left(
(1 + o(1))\left(\frac{2t\psi}{d- \psi}+ \gamma t\right)
\ln \frac{3db}{\frac{2t\psi}{d- \psi}+ \gamma t}
\right)\\
&\leq \exp\left(
3\left(\frac{2t\psi}{d- \psi}+ \gamma t\right)
\ln d
\right)=(1 + \lambda)^{(1 - \gamma)t},
\end{split}
\]
with the equality following from the definition of $\gamma$. Finally, by Lemma~\ref{lem:con3} and the above discussion, we obtain that
\[
\begin{split}
\sum_{A\in \mathcal{G}(a, b)}\frac{\lambda^{|A|}}{(1+\lambda)^b}
&\leq
|\mathcal{A}_1|\cdot|\mathcal{A}_2|\cdot\max\left\{
(1 + \lambda)^{-\gamma t},\ \binom{3db}{\leq \frac{2t\psi}{d- \psi}+ \gamma t}(1 + \lambda)^{-t}
\right\}\\
&\leq \binom{n}{d}\exp\left(
-\gamma t\ln(1 + \lambda) +
 O\left(\frac{t\ln d}{d^{2/3}}\right)
\right)=\binom{n}{d}\exp\left(
 -\frac{C_1t\ln d}{d^{2/3}}
\right),
\end{split}
\]
for some $C_1>0$, as $C_0$ is sufficiently large.
\qed

\section{Future directions}
There are many interesting problems along this line.
The most natural question is to determine the number of independent sets in $\mathcal{B}(n, k)$ for every $k$. 
As the ground graph is no longer regular, but biregular, proving the corresponding container theorem requires generalizations of Lemmas~\ref{lem:con1}, \ref{lem:con2}, \ref{lem:con3} for biregular graphs, which could be done by a slight variation of the  proof of Sapozhenko~\cite{sapozhenko1989number}, also see in~\cite{galvin2019independent}.

We believe that when $k$ is sufficiently close to $n/2$, Theorem~\ref{thm:tystru} can be extended to $\mathcal{B}(n, k)$, that is, typical independent sets only have `defects' of size 1 or 2 comparing to the trivial construction.
It would be interesting to determine the range of $k$ for which it works.
We decided not to work out the details, as we wanted to avoid extra technicality in this paper.

Another important open problem is to determine the precise asymptotics for the number of maximal independent sets of $\mathcal{B}(n, k)$, which is often denoted by $\mathrm{mis}(\mathcal{B}(n, k))$ in the literature. As we mentioned in Section~1, Balogh, Treglown and Wagner~\cite{balogh2016applications} disproved a conjecture of Ilinca and Kahn~\cite{ilinca2013counting} on $\mathrm{mis}(\mathcal{B}(n, k))$ by improving the trivial lower bound construction, and since then no further result is known.
After doing analysis on some special polymer model and its cluster expansion, we propose the following conjecture for the middle two layers, jointly  with Adam Zsolt Wagner.
\begin{conj}[Balogh, Garcia, Li, Wagner]
Let $n=2d-1$. As $d\rightarrow \infty$,
\[
|\mathrm{mis}(\mathcal{B}(n, d))|=(1 + o(1))n2^{\binom{n-1}{d-1}}\exp\left(\binom{n-1}{d-1}\frac{(d-1)^2}{2^n}\right).
\]
Moreover, almost all maximal independent set in $\mathcal{B}(n, d)$ can be obtained from the BTW construction in~\cite{balogh2016applications}.
\end{conj}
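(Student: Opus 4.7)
The plan is to adapt the polymer-model cluster expansion of Jenssen and Perkins, as used for $Z(\lambda)$ in the proof of Theorem~\ref{thm:indpoly}, to the setting of \emph{maximal} independent sets. The leading factor $n$ is very suggestive: it should arise from $n$ near-symmetric ground states, one for each ``pivot'' coordinate $i\in[n]$. Indeed, for fixed $i$ the pairs $\{(A,A\cup\{i\}):A\in\binom{[n]\setminus\{i\}}{d-1}\}$ form an induced matching of size $\binom{n-1}{d-1}$ in $\mathcal{B}(n,d)$, and each binary choice $\xi$ of one endpoint per pair extends canonically to a maximal independent set: a $d$-subset $v$ not containing $i$ is forced in iff $\xi(v\setminus\{x\})=1$ for every $x\in v$, and a $(d-1)$-subset $v=B\cup\{i\}$ is forced in iff $\xi(B\cup\{x\})=0$ for every $x\notin v$. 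This produces exactly $2^{\binom{n-1}{d-1}}$ maximal independent sets rooted at $i$, matching the trivial factor, while the conjectured $\exp\bigl(\binom{n-1}{d-1}(d-1)^2/2^n\bigr)$ correction ought to be the contribution of small local defects against this skeleton, of exactly the magnitude of the BTW improvement $2^{\Theta(n^{3/2})}$.

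First I would fix $i$ and define polymers as $2$-linked subsets of the ``defect'' vertices, i.e.\ those whose membership in the maximal independent set disagrees with the template prescription of $\xi$. The polymer weights would be chosen so that $n$ times the resulting partition function equals $|\mathrm{mis}(\mathcal{B}(n,d))|$ up to lower-order double-counting across templates. A Koteck\'y--Preiss bound in the spirit of Lemma~\ref{lem:kot-cond} would then yield convergence of the cluster expansion, and an explicit Ursell-function computation for size-$1$ and size-$2$ clusters (parallel to those preceding the proof of Theorem~\ref{thm:indset}) should produce the exponent $\binom{n-1}{d-1}(d-1)^2/2^n$. The ``almost all from BTW'' structural claim would then follow from rapid tail decay of the cluster expansion, mirroring Lemma~\ref{Min side is defect side}; the lower bound matching the asymptotic, and control of the overlap sets $\mathrm{Mis}_i\cap\mathrm{Mis}_j$ between different templates, should come from a union bound using the refined container lemma described below.

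The main obstacle is that maximality is a genuinely \emph{non-local} constraint: flipping a single polymer can create or destroy a forced-inclusion obligation at a vertex far away, so non-adjacent polymers do not decouple as cleanly as they do for plain independent sets, and the standard polymer-model definition of adjacency via the $2$-link metric breaks down. Recovering a usable polymer model would require packaging each polymer together with its ``obligation envelope'' -- the set of vertices whose forced status depends on $\xi$ in a neighborhood of the polymer -- and then proving a container lemma that controls the closure and the obligation envelope simultaneously. This is a genuine strengthening of Lemma~\ref{lem:container}, and I expect it to require an iterated Sapozhenko-style approximation, as in Lemmas~\ref{lem:con1}--\ref{lem:con3}, but with the certificates $(F^*,S^*)$ augmented by a third set encoding the envelope, together with biregular isoperimetric inputs strengthening Lemma~\ref{lem:satu} since the ambient graph $\mathcal{B}(n,k)$ is only biregular once one works outside the exact middle layer.
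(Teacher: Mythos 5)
First, a point of order: the statement you are addressing is presented in the paper as an open \emph{conjecture} (attributed to Balogh, Garcia, Li and Wagner), and the paper supplies no proof of it --- only the remark that it was formulated ``after doing analysis on some special polymer model and its cluster expansion.'' So there is no paper proof to compare against, and what you have written is, by your own account, a research plan rather than a proof. The plan is in the right spirit and consistent with how the prediction was evidently generated: the $n$ pivot templates built on the induced matchings $\{(A,\,A\cup\{i\})\}$ do account for the factor $n2^{\binom{n-1}{d-1}}$, and your check that $\binom{n-1}{d-1}(d-1)^2/2^n=\Theta(d^{3/2})$ matches the BTW improvement $2^{\Theta(n^{3/2})}$ from \cite{balogh2016applications} is correct and useful.

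However, the proposal contains genuine, unresolved gaps --- ones you yourself flag. (i) Maximality is a domination condition (every vertex outside $I$ must have a neighbour in $I$), so the conditional independence that makes the independent-set polymer model work --- once the defect side is fixed, the non-defect vertices are filled in independently --- fails outright: whether a far-away vertex is forced into or barred from $I$ depends on the template choice $\xi$ and on other defects, so polymers do not decouple under the $2$-link adjacency and the partition-function identity underlying Lemma~\ref{lem: appro} has no analogue yet. Your ``obligation envelope'' device is the right kind of fix, but you give no definition precise enough to verify that the resulting weights satisfy a Koteck\'{y}--Preiss condition as in Lemma~\ref{lem:kot-cond}, or that the envelope-augmented container lemma (a strict strengthening of Lemma~\ref{lem:container} and of Lemmas~\ref{lem:con1}--\ref{lem:con3}) actually holds. (ii) Without the model pinned down, the size-$1$ and size-$2$ cluster computation that is supposed to produce the exponent $\binom{n-1}{d-1}(d-1)^2/2^n$ cannot be carried out or checked, so the proposal never actually derives the conjectured constant. (iii) The control of overlaps between the $n$ templates, and the matching lower bound, are deferred to an unproved union-bound argument. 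Each of these is a substantive missing step; as it stands this is a plausible programme for attacking an open problem, not a proof.
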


Just as for hypercube $Q_n$, it is also natural to study the number of proper $q$-colorings for the middle two layers.
Moreover, one might ask the above questions for other interesting graphs with good expansion properties.
This polymer method of Jenssen and Perkins is especially useful in estimating the number of `defects' from some `ground state' structure, and we expect that there are more problems for which the method is naturally applicable.

\section*{Acknowledgement}
We are very grateful to Jinyoung Park for some helpful discussion in the early stage of the project, and Adam Zsolt Wagner for reading over an earlier version of this manuscript.
 We would also like to thank the anonymous referees for their careful reading and valuable comments.

\end{document}